\theoremstyle{plain}
\newtheorem{theorem}{Theorem}[section]
\newtheorem*{theorem*}{Theorem}
\newtheorem{proposition}[theorem]{Proposition}
\newtheorem{corollary}[theorem]{Corollary}
\newtheorem{lemma}[theorem]{Lemma}
\theoremstyle{definition}
\newtheorem{definition}[theorem]{Definition}
\newtheorem{remark}[theorem]{Remark}
\newcommand{\enm}[1]{\ensuremath{#1}}          %
\newcommand{\op}[1]{\operatorname{#1}}
\newcommand{\cal}[1]{\mathcal{#1}}
\newcommand{\CC}{\enm{\mathbb{C}}}
\newcommand{\NN}{\enm{\mathbb{N}}}
\newcommand{\ZZ}{\enm{\mathbb{Z}}}
\newcommand{\FF}{\enm{\mathbb{F}}}
\newcommand{\PP}{\enm{\mathbb{P}}}
\newcommand{\Aa}{\enm{\cal{A}}}
\newcommand{\Bb}{\enm{\cal{B}}}
\newcommand{\Ee}{\enm{\cal{E}}}
\newcommand{\Ff}{\enm{\cal{F}}}
\newcommand{\Gg}{\enm{\cal{G}}}
\newcommand{\Hh}{\enm{\cal{H}}}
\newcommand{\Ii}{\enm{\cal{I}}}
\newcommand{\Ll}{\enm{\cal{L}}}
\newcommand{\Nn}{\enm{\cal{N}}}
\newcommand{\Oo}{\enm{\cal{O}}}
\newcommand{\Rr}{\enm{\cal{R}}}
\renewcommand{\phi}{\varphi}
\renewcommand{\theta}{\vartheta}
\renewcommand{\epsilon}{\varepsilon}
\newcommand{\Pic}{\op{Pic}}
\newcommand{\Hom}{\op{Hom}}
\newcommand{\End}{\op{End}}
\newcommand{\rk}{\op{rank}}
\newcommand{\id}{\op{id}}
\renewcommand{\to}[1][]{\xrightarrow{\ #1\ }}
\newcommand{\old}[1]{}
\begin{document}

\title[A note on co-Higgs bundles]{A note on co-Higgs bundles}
\author{Edoardo Ballico and Sukmoon Huh}
\address{Universit\`a di Trento, 38123 Povo (TN), Italy}
\email{edoardo.ballico@unitn.it}
\address{Sungkyunkwan University, 300 Cheoncheon-dong, Suwon 440-746, Korea}
\email{sukmoonh@skku.edu}
\keywords{co-Higgs bundle, stability, global tangent vector field}
\thanks{The first author is partially supported by MIUR and GNSAGA of INDAM (Italy). The second author is supported by Basic Science Research Program 2015-037157 through NRF funded by MEST}

\subjclass[2010]{Primary: {14J60}; Secondary: {14D20, 53D18}}

\begin{abstract}
We show that for any ample line bundle on a smooth complex projective variety with nonnegative Kodaira dimension, the semistability of co-Higgs bundles of implies the semistability of bundles. Then we investigate the criterion for surface $X$ to have $H^0(T_X)=H^0(S^2T_X)=0$, which implies that any co-Higgs structure of rank two is nilpotent.
\end{abstract}

\maketitle

\section{Introduction}
A co-Higgs bundle on a smooth complex projective variety $X$ is a pair $(\Ee, \Phi)$, where $\Ee$ is a vector bundle on $X$ and $\Phi$, the co-Higgs field, is a morphism $\Ee \rightarrow \Ee \otimes T_X$, satisfying the integrability condition $\Phi \wedge \Phi=0$. It is introduced and developed in \cite{Hi, Gual} as a generalized vector bundle over $X$, considered as a generalized complex manifold. 

There have been recent interests on the classification of stable co-Higgs bundles on lower dimensional varieties. In \cite{R1, Rayan}, Rayan describe the moduli spaces of stable co-Higgs bundles of rank two both on the projective line and on the projective plane, and show several non-existence results of them over varieties with nonnegative Kodaira dimension $\kappa(X)\ge 0$. The main philosophy is that the existence of stable co-Higgs bundles determine the position of $X$ toward negative direction in the Kodaira spectrum. Indeed it is shown in \cite{Correa} that if $\dim (X)=2$, the existence of semistable co-Higgs bundle $(\Ee, \Phi)$ of rank two with $\Phi$ nilpotent, implies that $X$ is either uniruled, a torus, or a properly elliptic surface, up to finite \'etale cover. Colmenares also describes the moduli space of semistable co-Higgs bundles of rank two on Hirzebruch surfaces in \cite{VC, VC1}. 

Our goal in this article is twofold. We first investigate the relationship between semistability of $(\Ee, \Phi)$ and semistability of $\Ee$;

\begin{theorem}
With respect to any ample line bundle on a smooth projective variety with nonnegative Kodaira dimension, if a co-Higgs bundle $(\Ee, \Phi)$ is semistable, then $\Ee$ is also semistable. 
\end{theorem}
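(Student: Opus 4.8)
The plan is to argue by contradiction, playing the Harder--Narasimhan filtration of $\Ee$ against the fact that on a variety of nonnegative Kodaira dimension the tangent bundle carries no subsheaf of positive slope. Fix the ample line bundle $A$ and set $n=\dim X$; compute degrees and slopes with respect to $A$, so that $\deg\Ff=c_1(\Ff)\cdot A^{n-1}$ and $\mu(\Ff)=\deg\Ff/\rk\Ff$. Suppose $\Ee$ is not semistable, and let $0=\Ee_0\subsetneq\Ee_1\subsetneq\cdots\subsetneq\Ee_k=\Ee$ with $k\ge2$ be its Harder--Narasimhan filtration, whose quotients $\Gg_i=\Ee_i/\Ee_{i-1}$ are semistable of strictly decreasing slopes $\mu_1>\mu_2>\cdots>\mu_k$. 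Then $\mu(\Ee_1)=\mu_1=\mu_{\max}(\Ee)>\mu(\Ee)$ and $0<\rk\Ee_1<\rk\Ee$, so $\Ee_1$ destabilizes $\Ee$. Since $(\Ee,\Phi)$ is semistable as a co-Higgs bundle, $\Ee_1$ cannot be $\Phi$-invariant, and hence the composition
\[
\psi\colon\ \Ee_1\ \hookrightarrow\ \Ee\ \xrightarrow{\ \Phi\ }\ \Ee\otimes T_X\ \longrightarrow\ (\Ee/\Ee_1)\otimes T_X
\]
is a nonzero map of sheaves.

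The input that uses the hypothesis $\kappa(X)\ge0$ is the bound $\mu_{\max}(T_X)\le0$ with respect to $A$. Indeed, $\kappa(X)\ge0$ forces $X$ to be non-uniruled, so by Miyaoka's generic semipositivity theorem the restriction $\Omega^1_X|_C$ to a general complete intersection curve $C$ of type $(m,\dots,m)$ with $m\gg0$ has all Harder--Narasimhan slopes $\ge0$; by the Mehta--Ramanathan restriction theorem the Harder--Narasimhan filtration of $\Omega^1_X|_C$ is the restriction of that of $\Omega^1_X$, whence $\mu_{\min}(\Omega^1_X)\ge0$ and therefore $\mu_{\max}(T_X)=-\mu_{\min}(\Omega^1_X)\le0$.

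Now I would extract the contradiction. Let $I\subseteq(\Ee/\Ee_1)\otimes T_X$ be the image of $\psi$, a nonzero sheaf. As a quotient of the semistable sheaf $\Ee_1$ of slope $\mu_1$ we have $\mu_{\min}(I)\ge\mu_1$. As a subsheaf of $(\Ee/\Ee_1)\otimes T_X$ we have $\mu_{\max}(I)\le\mu_{\max}\bigl((\Ee/\Ee_1)\otimes T_X\bigr)$; since we work in characteristic zero, tensor products of $\mu$-semistable sheaves are $\mu$-semistable and $\mu_{\max}$ is additive under tensor product, so
\[
\mu_{\max}\bigl((\Ee/\Ee_1)\otimes T_X\bigr)=\mu_{\max}(\Ee/\Ee_1)+\mu_{\max}(T_X)\le\mu_2+0<\mu_1 ,
\]
using $\mu_{\max}(\Ee/\Ee_1)=\mu_2$. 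Combining, $\mu_1\le\mu_{\min}(I)\le\mu_{\max}(I)<\mu_1$, which is absurd; hence $\Ee$ is semistable.

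I expect the only genuine subtlety to be the slope estimate $\mu_{\max}(T_X)\le0$, \ie invoking generic semipositivity together with Mehta--Ramanathan correctly (and, in the last displayed inequality, the additivity of $\mu_{\max}$ under tensor product in characteristic zero). An essentially equivalent, perhaps more elementary route is to run the whole Harder--Narasimhan argument on a general complete intersection curve $C$ of large degree: there Mehta--Ramanathan identifies the maximal destabilizing subsheaf of $\Ee|_C$ with $\Ee_1|_C$, additivity of $\mu_{\max}$ under tensor is automatic on a curve, and $\mu_{\max}(T_X|_C)\le0$ is exactly Miyaoka's statement; one then only has to check that $\Phi$-invariance of $\Ee_1$ may be tested after restriction to general such $C$. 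Note that the integrability condition $\Phi\wedge\Phi=0$ is never used here; the argument works verbatim for an arbitrary $\Phi\colon\Ee\to\Ee\otimes L$ with $\mu_{\max}(L)\le0$.
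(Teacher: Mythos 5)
Your proposal is correct and takes essentially the same approach as the paper's proof: argue by contradiction with the Harder--Narasimhan filtration, note that co-Higgs semistability forces a nonzero induced map $\Ee_1\rightarrow(\Ee/\Ee_1)\otimes T_X$, and use Miyaoka's generic semipositivity of $\Omega^1_X$ for non-uniruled $X$ together with Mehta--Ramanathan restriction to get the slope contradiction. The only difference is packaging: the paper performs the comparison explicitly on a general complete intersection curve $C$, using nefness of $\Omega^1_X|_C$ against the negative-degree Harder--Narasimhan quotients of the target, whereas you encode the same inputs as $\mu_{\max}(T_X)\le 0$ plus additivity of $\mu_{\max}$ under tensor product in characteristic zero.
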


In fact, when we disregard the condition on Kodaira dimension, we also get a similar statement on stabilities under the assumption that the strict order of  instability is low (see Proposition \ref{t2}). The results give a way to study moduli of semistable co-Higgs bundles, with a base on the study of moduli of semistable bundles. Indeed, Proposition \ref{tt5} asserts that any semistable co-Higgs bundle of rank two over a surface of general type has a trivial co-Higgs field, and so the semistability of co-Higgs bundles of rank two is equivalent to the semistability of bundles. 

Then we pay our attention to the case when $X$ is a surface and the rank of co-Higgs bundles is two. Under vanishing of $H^0(T_X)$, the existence of unstable co-Higgs bundle of rank two implies that the co-Higgs field is nilpotent (see Lemma \ref{tt6}), while any non-trivial global tangent vector field suggests an example of strictly semistable co-Higgs bundle of arbitrary rank with injective co-Higgs field (see Lemma \ref{oo1}). It motivates to seek for additional conditions to assure that a semistable co-Higgs bundle has a nilpotent co-Higgs field. By the argument in \cite[Theorem 7.1, page 148--149]{R1}, the vanishing condition $H^0(T_X)=H^0(S^2T_X)=0$ implies that co-Higgs fields are nilpotent. Indeed, any surface can achieve this vanishing after a finite number of blow-ups. 

\begin{theorem}
For a surface $X$, there exists a surface $X'$ and a birational morphism $u: X'\rightarrow X$ with the following property. If $v: X''\rightarrow X'$ is any birational morphism, then every rank two co-Higgs field on $X''$ is nilpotent. 
\end{theorem}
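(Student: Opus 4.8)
The plan is to reduce the statement to the nilpotency criterion recalled above: by the argument of \cite[Theorem~7.1]{R1}, if a smooth projective surface $Y$ satisfies $H^0(Y,T_Y)=H^0(Y,S^2T_Y)=0$, then every rank two co-Higgs field on $Y$ is nilpotent. Thus it suffices to do two things: (i) produce a birational morphism $u\colon X'\rightarrow X$ with $H^0(X',T_{X'})=H^0(X',S^2T_{X'})=0$; and (ii) verify that this double vanishing is inherited by every smooth surface that admits a birational morphism to $X'$. Recalling that a birational morphism between smooth projective surfaces factors as a finite composition of blow-ups of points, both (i) and (ii) reduce to controlling $H^0(T)$ and $H^0(S^2T)$ under a single point blow-up.

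First I would record the monotonicity. Let $\pi\colon\wt X\rightarrow X$ be the blow-up of a point $p$, with exceptional curve $E\cong\PP^1$. Pullback of $1$-forms gives an injection $\pi^*\Omega_X\hookrightarrow\Omega_{\wt X}$ with cokernel supported on $E$; dualizing yields an injection of locally free sheaves $d\pi\colon T_{\wt X}\hookrightarrow\pi^*T_X$ that is an isomorphism over $X\setminus\{p\}$, and its symmetric square $S^2(d\pi)\colon S^2T_{\wt X}\hookrightarrow\pi^*S^2T_X$ is again injective (its kernel is torsion inside a torsion-free sheaf, hence zero). Taking $H^0$ and using $\pi_*\Oo_{\wt X}=\Oo_X$ and the projection formula, one gets injections $H^0(\wt X,T_{\wt X})\hookrightarrow H^0(X,T_X)$ and $H^0(\wt X,S^2T_{\wt X})\hookrightarrow H^0(X,S^2T_X)$. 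Hence $h^0(T)$ and $h^0(S^2T)$ are non-increasing under blow-ups, and so under arbitrary birational morphisms of smooth surfaces; this settles (ii), and it remains to construct $X'$.

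For (i) I would sharpen these injections along $E$ and then induct. Identifying $E$ with $\PP(T_{X,p})$, the restriction of $d\pi$ to the fibre over a point $[v]$ maps $T_{\wt X}|_{[v]}$ onto the line $\langle v\rangle\subseteq T_{X,p}=(\pi^*T_X)|_{[v]}$, so the image of $S^2(d\pi)$ over $[v]$ is the line $\langle v^{2}\rangle\subseteq S^2T_{X,p}$. Therefore, if $s\in H^0(\wt X,T_{\wt X})$ has image $\bar s\in H^0(X,T_X)$ (under $H^0(\pi^*T_X)=H^0(X,T_X)$), the constant section $\bar s(p)$ of $(\pi^*T_X)|_E$ must lie in $\langle v\rangle$ for every $v\in T_{X,p}$, which forces $\bar s(p)=0$; the same argument applied to $S^2$ gives $\bar\tau(p)=0$ for the image $\bar\tau$ of any $\tau\in H^0(\wt X,S^2T_{\wt X})$. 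Thus
\[
H^0(\wt X,T_{\wt X})\subseteq\{\,s\in H^0(X,T_X): s(p)=0\,\},\qquad H^0(\wt X,S^2T_{\wt X})\subseteq\{\,\tau\in H^0(X,S^2T_X): \tau(p)=0\,\}.
\]
Now I would run a descending induction on the nonnegative integer $h^0(T_X)+h^0(S^2T_X)$. If $h^0(T_X)>0$, choose a nonzero $s\in H^0(T_X)$ and a point $p\notin Z(s)$ (possible since $Z(s)\subsetneq X$); blowing up $p$ makes $h^0(T)$ drop strictly, by the left inclusion, while $h^0(S^2T)$ does not increase. If $h^0(T_X)=0$ but $h^0(S^2T_X)>0$, do the same with a nonzero section of $S^2T_X$ and the right inclusion: then $h^0(S^2T)$ drops strictly and $h^0(T)$ stays $0$. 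In either case the integer $h^0(T)+h^0(S^2T)$ strictly decreases, so after finitely many blow-ups we arrive at a surface $X'$ with $H^0(X',T_{X'})=H^0(X',S^2T_{X'})=0$; taking $u$ to be the composite blow-down and invoking the criterion of \cite{R1} finishes the proof.

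The step I expect to be the main obstacle is the local analysis of $d\pi$ along $E$ behind the two displayed inclusions --- equivalently, pinning down which global sections of $T_X$ and $S^2T_X$ extend to regular sections of $T_{\wt X}$ and $S^2T_{\wt X}$; this can also be carried out by a direct computation in the two standard affine charts of the blow-up. The remaining ingredients --- monotonicity of $h^0$, the factorization of birational morphisms of surfaces into point blow-ups, and the implication ``$H^0(T)=H^0(S^2T)=0\Rightarrow$ every rank two co-Higgs field is nilpotent'' from \cite{R1} --- are formal or already available.
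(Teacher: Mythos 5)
Your argument is correct and is essentially the paper's: reduce to producing $X'$ with $H^0(T_{X'})=H^0(S^2T_{X'})=0$, observe that these vanishings persist under any further birational morphism because $h^0(T)$ and $h^0(S^2T)$ can only drop under point blow-ups (Remarks \ref{aaa0}, \ref{aaa2} and Lemma \ref{aaa1}), and invoke the nilpotency criterion from \cite[Theorem 7.1]{R1} (Lemma \ref{c4}). The only divergence is in packaging: the paper blows up in one step a finite set $S$ with $H^0(\Ii_S\otimes T_X)=H^0(\Ii_S\otimes S^2T_X)=0$ (Proposition \ref{bb1}), proving the key blow-up statements via $\mathrm{Aut}^0$ and a pushforward argument, whereas you blow up points one at a time with a descending induction on $h^0(T_X)+h^0(S^2T_X)$ and verify the vanishing at the blown-up point via the tautological description of $d\pi$ along the exceptional curve, which is a perfectly sound (and self-contained) substitute.
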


Since, due to the Enriques-Kodaira classification, we have a list of minimal model $Y$ of $X$ together with information on $H^0(T_Y)$ and $H^0(S^2T_Y)$, we are able to find several classes of surfaces with the prescribed vanishing: blow-ups of the projective plane, Hirzebruch surfaces, abelian surfaces and a type of properly elliptic surfaces.

Let us summarize here the structure of this article. In section $2$ we introduce the definition of co-Higgs bundles and a notion of semistability with respect to a fixed ample line bundle. Then we discuss the relationship between semistability of co-Higgs bundles and semistability of bundles, together with nilpotent co-Higgs fields. Main ingredients are the generic nefness of the cotangent bundle of non-uniruled $X$ and certain extensions that nilpotent co-Higgs fields induce. In section $3$, we study the extensions above  via holomorphic foliation in \cite{SB} to characterize the base variety $X$ in Kodaira spectrum, when it admits a semistable co-Higgs bundle of rank two whose bundle factor is not semistable with non-zero co-Higgs field. In section $4$, we mainly work on the criterion for vanishing $H^0(T_X)$ and $H^0(S^2T_X)$ and it is observed that the vanishing can be achieved by blow-ups sufficiently many times.


\section{Preliminaries}
Throughout the article our base field is the field $\CC$ of complex numbers. We will always assume that $X$ is a smooth projective variety with a fixed very ample line bundle $\Oo_X(1)$ and the tangent bundle $T_X$. For a coherent sheaf $\Ee$ on a projective scheme $X$, we denote $\Ee \otimes \Oo_X(t)$ by $\Ee(t)$ for $t\in \ZZ$. The dimension of cohomology group $H^i(X, \Ee)$ is denoted by $h^i(X,\Ee)$ and we will skip $X$ in the notation, if there is no confusion. 

\begin{lemma}\label{t1}
$\Omega _X^1(2)$ is globally generated.
\end{lemma}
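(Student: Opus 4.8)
The plan is to reduce the assertion to the model case $X=\PP^N$ by using the given very ample embedding. Choose an embedding $j\colon X\hookrightarrow \PP:=\PP^N$ with $j^*\Oo_\PP(1)=\Oo_X(1)$. Since $X$ is smooth, the conormal exact sequence
\[
0 \to \Nn^{\vee}_{X/\PP} \to \Omega^1_\PP|_X \to \Omega^1_X \to 0
\]
is exact, and twisting by $\Oo_X(2)$ yields a surjection $\Omega^1_\PP(2)|_X \twoheadrightarrow \Omega^1_X(2)$. A quotient of a globally generated sheaf is globally generated, and the restriction to $X$ of a globally generated sheaf on $\PP$ is globally generated (sections restrict); hence it suffices to show that $\Omega^1_\PP(2)$ is globally generated on $\PP=\PP^N$, the cases $N\le 1$ or $\dim X=0$ being trivial.

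For $\PP^N$ with $N\ge 2$ I would argue as follows. The Euler sequence twisted by $\Oo_\PP(-1)$,
\[
0 \to \Oo_\PP(-1) \to \Oo_\PP^{\oplus(N+1)} \to T_\PP(-1) \to 0,
\]
exhibits $T_\PP(-1)$ as a quotient of a trivial bundle, so $T_\PP(-1)$ is globally generated, and therefore so is $\wedge^{N-1}\big(T_\PP(-1)\big)$, being a quotient of $\wedge^{N-1}\big(\Oo_\PP^{\oplus(N+1)}\big)\cong\Oo_\PP^{\oplus\binom{N+1}{N-1}}$. Finally $\wedge^{N-1}T_\PP \cong \Omega^1_\PP\otimes\det T_\PP = \Omega^1_\PP(N+1)$, so $\wedge^{N-1}\big(T_\PP(-1)\big)\cong\Omega^1_\PP(N+1)\otimes\Oo_\PP(1-N)=\Omega^1_\PP(2)$, which completes the proof. (Concretely, one may instead note that in homogeneous coordinates $x_0,\dots,x_N$ the global sections $x_i\,dx_j-x_j\,dx_i$ of $\Omega^1_\PP(2)$ generate it, as is checked on each standard affine chart.)

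Since every step after the reduction to $\PP^N$ is formal, there is no real obstacle here; the only substantive ingredient is the global generation of $\Omega^1_{\PP^N}(2)$, which is classical, and either route above (exterior powers of the twisted tangent bundle, or the explicit sections $x_i\,dx_j-x_j\,dx_i$) makes the argument self-contained.
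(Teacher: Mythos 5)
Your proposal is correct, and its overall skeleton is the same as the paper's: embed $X$ into projective space by the very ample linear system, use the conormal sequence $0\to \Nn^\vee_{X/\PP}(2)\to \Omega^1_{\PP}(2)|_X\to \Omega^1_X(2)\to 0$, and conclude from global generation of $\Omega^1_{\PP}(2)$, since quotients and restrictions of globally generated sheaves are globally generated. Where you differ is in the model case: the paper disposes of $\Omega^1_{\PP^r}(2)$ in one line by observing that it is $0$-regular (Bott's formula gives the required cohomology vanishings, and $0$-regularity implies global generation by Castelnuovo--Mumford), whereas you prove global generation directly, either by identifying $\Omega^1_{\PP}(2)$ with $\wedge^{N-1}\bigl(T_{\PP}(-1)\bigr)$ and exhibiting it as a quotient of a trivial bundle via the Euler sequence (the identification $\wedge^{N-1}T_{\PP}\cong \Omega^1_{\PP}\otimes\det T_{\PP}=\Omega^1_{\PP}(N+1)$ is correct), or by writing down the generating sections $x_i\,dx_j-x_j\,dx_i$. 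Your route is more elementary and self-contained, avoiding regularity and Bott vanishing altogether; the paper's citation is shorter and generalizes immediately to statements like $m$-regularity of other twists. Both are complete proofs, and your handling of the trivial cases $N\le 1$ is a harmless extra precaution.
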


\begin{proof}
Consider $X$ as a subvariety of a projective space $\PP^r$ such that the embedding is given by a complete linear system $|\Oo _X(1)|$. Then we have an exact sequence
$$0 \to N_{X|\PP^r}^\vee (2) \to {\Omega _{\PP^r}^1(2)}_{\vert_X} \to \Omega _X^1(2) \to 0,$$
where $N_{X|\PP^r}$ is the normal bundle of $X$ in $\PP^r$. Note that $\Omega _{\PP ^r}^1(2)$ is $0$-regular by the Bott formula. Thus it is globally generated and so is $\Omega_X^1 (2)$. 
\end{proof}
In particular, we have $h^0(T_X(-2))=0$. Now we give the definition of co-Higgs bundle, which is the main object of this article. 

\begin{definition}
A {\it{co-Higgs}} bundle on $X$ is a pair $(\Ee, \Phi)$ where $\Ee$ is a vector bundle on $X$ and $\Phi \in H^0(\mathcal{E}nd (\Ee) \otimes T_X)$ for which $\Phi\wedge \Phi=0$ as an element of $H^0(\mathcal{E}nd(\Ee) \otimes \wedge^2 T_X)$. Here $\Phi$ is called the {\it{co-Higgs field}} of $(\Ee, \Phi)$ and the condition $\Phi \wedge \Phi=0$ is called the {\it integrability}. 
\end{definition}

We may constrain ourselves to the co-Higgs bundles $(\Ee, \Phi)$ with trace zero, i.e. $\Phi$ is contained in $H^0(\mathcal{E}nd_0 (\Ee) \otimes T_X)$, because each co-Higgs field $\Phi$ can be decomposed as $(\phi_1, \phi_2)$, where $\phi_1\in H^0(\mathcal{E}nd_0(\Ee)\otimes T_X)$ and $\phi_2 \in H^0(T_X)$ due to the splitness of the trace map sequence: $0\rightarrow \mathcal{E}nd_0(\Ee) \rightarrow \mathcal{E}nd(\Ee) \stackrel{\mathrm{tr}}{\rightarrow} \Oo_X \rightarrow 0$. In particular, if $H^0(T_X)=0$, then every co-Higgs field has trace-zero.


\begin{definition}\label{ss1}
A co-Higgs bundle $(\Ee, \Phi)$ is {\it semistable} (resp. {\it stable}) if
\begin{align*}
\frac{\deg \Ff}{\rk \Ff} \leq~ (\text{resp.} <)~\frac{\deg \Ee}{\rk \Ee}
\end{align*}
for every coherent subsheaf $0\subsetneq \Ff \subsetneq \Ee$ with $\Phi(\Ff) \subset \Ff \otimes T_X$. 
\end{definition}

The following observation shows why to consider (non-)existence results for co-Higgs bundles $(\Ee, \Phi)$ with $\Phi \ne 0$ one usually assume that $(\Ee ,\Phi )$ is semistable.

\begin{remark}
Take a positive integer $k$ such that $T_X(k)$ is globally generated and choose a non-zero section $\sigma \in H^0(T_X(k))$. For an integer $r\ge 2$, set $\Ee:= \Oo _X\oplus \Oo _X(k)^{\oplus (r-1)}$. If we define a map $\Phi : \Ee \rightarrow \Ee \otimes T_X$ to send the factor $\Oo _X$ to one of the factor, $T_X(k)$, of $\Ee \otimes T_X$ using $\sigma$ and send $\Oo _X(k)^{\oplus (r-1)}$ onto $0$. By construction we have $\Phi \circ \Phi = 0$ and so $\Phi \wedge \Phi =0$.
\end{remark}

\begin{lemma}\label{tt6}
Assume that $H^0(T_X)=0$. If a co-Higgs bundle $(\Ee,\Phi)$ of rank two on $X$ is not stable, then $\Phi$ is nilpotent.
\end{lemma}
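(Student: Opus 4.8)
The plan is to exploit the fact that a rank-two bundle $\Ee$ which is not stable admits a $\Phi$-invariant rank-one subsheaf of maximal degree, and then show that the induced data on the quotient forces $\Phi$ to be strictly lower-triangular, hence nilpotent. Since $H^0(T_X)=0$, the co-Higgs field is automatically trace-zero, so $\Phi\in H^0(\mathcal{E}nd_0(\Ee)\otimes T_X)$; this is what will give the crucial cancellation at the end.

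First I would argue that there is a line subbundle (saturated rank-one subsheaf) $L\subset\Ee$ with $\deg L\ge \tfrac12\deg\Ee$ which is $\Phi$-invariant, i.e.\ $\Phi(L)\subset L\otimes T_X$. Because $(\Ee,\Phi)$ is not stable, some $\Phi$-invariant $0\subsetneq\Ff\subsetneq\Ee$ has $\deg\Ff/\rk\Ff\ge\tfrac12\deg\Ee$; replacing $\Ff$ by its saturation only increases the degree and preserves $\Phi$-invariance, so we may take $\Ff=L$ a line subbundle. Write $M=\Ee/L$, also a line bundle, with $\deg M=\deg\Ee-\deg L\le\deg L$. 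The co-Higgs field then induces, via the short exact sequence $0\to L\to\Ee\to M\to 0$ and $\Phi$-invariance of $L$, a morphism $\bar\Phi\colon M\to M\otimes T_X$, which is a global section of $T_X$ (using $M^\vee\otimes M\cong\Oo_X$). By hypothesis $H^0(T_X)=0$, so $\bar\Phi=0$; equivalently $\Phi(\Ee)\subset L\otimes T_X$.

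Now $\Phi$ is described, with respect to the filtration $L\subset\Ee$, by a lower-triangular matrix: the diagonal entries are $\Phi|_L\colon L\to L\otimes T_X$ (a section $a\in H^0(T_X)$) and $\bar\Phi\colon M\to M\otimes T_X$ (a section $d\in H^0(T_X)$), and the one off-diagonal entry lies in $\Hom(M,L\otimes T_X)$. Again $H^0(T_X)=0$ kills $a$ and $d$, so both diagonal entries vanish; alternatively, trace-zero gives $a+d=0$ and then $\Phi(\Ee)\subset L\otimes T_X$ forces $d=0$ hence $a=0$. In either case, in a local frame extending a local frame of $L$, the matrix of $\Phi$ is $\begin{pmatrix}0&*\\0&0\end{pmatrix}\otimes(\text{vector field})$, so $\Phi^2=0$ and in particular $\Phi$ is nilpotent.

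The only point requiring care is the construction of the saturated $\Phi$-invariant line subbundle and the verification that $\Phi$-invariance is inherited by the saturation; this is standard (the saturation of a $\Phi$-invariant subsheaf in a torsion-free sheaf is again $\Phi$-invariant, since $\Phi$ is $\Oo_X$-linear and the quotient by the saturation is torsion-free), so I expect no real obstacle. The essential mechanism is simply that on $X$ with $H^0(T_X)=0$ there are no nonzero ``diagonal'' co-Higgs fields on line bundles, which simultaneously kills the quotient field and the diagonal entries.
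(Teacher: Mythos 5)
Your proposal is correct and follows essentially the same route as the paper: pass to a saturated $\Phi$-invariant rank-one subsheaf $L$, use $H^0(T_X)=0$ to kill both the induced map on the quotient (giving $\Phi(\Ee)\subset L\otimes T_X$) and the map $L\to L\otimes T_X$, hence $\Phi^2=0$. The only nitpick is that the quotient $\Ee/L$ is in general only a torsion-free rank-one sheaf $\Ii_Z\otimes\Aa$ rather than a line bundle, but a map $\Ii_Z\otimes\Aa\to\Ii_Z\otimes\Aa\otimes T_X$ still corresponds to a section of $T_X$, so your argument goes through exactly as in the paper.
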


\begin{proof}
We may assume $\Phi \ne 0$. Since $(\Ee ,\Phi )$ is not stable, there is a line subbundle $\Ll\subset \Ee$ such that $\Phi (\Ll) \subset \Ll\otimes T_X$ and $\deg \Ll \ge \deg \Rr/2$, where $\Rr:= \det (\Ee )$. The saturation $\Ll'$ of $\Ll$ in $\Ee$ satisfies $\Phi (\Ll')\subset \Ll'\otimes T_X$, because $\Ll'\otimes T_X$ is the saturation of $\Ll\otimes T_X$ in $\Ee\otimes T_X$. Thus we may assume that $\Ll$ is saturated in $\Ee$ with an exact sequence
\begin{equation}\label{eqa11}
0 \to \Ll \to \Ee \to \Ii_Z\otimes \Rr\otimes \Ll^\vee \to 0.
\end{equation}
Due to vanishing $H^0(T_X)=0$, the inclusion $\Phi (\Ll) \subset \Ll\otimes T_X$ implies $\Phi (\Ll) =0$. So $\Phi$ induces a map $u: \Ii _Z\otimes \Rr \otimes \Ll ^\vee \rightarrow \Ee \otimes T_X$. Composing $u$ with the map $\Ee \otimes T_X \rightarrow \Ii _Z\otimes \Rr \otimes T_X$ induced by (\ref{eqa11}) we get a map $v: \Ii _Z\otimes \Rr\otimes \Ll^\vee \rightarrow  \Ii _Z\otimes \Rr\otimes \Ll^\vee \otimes T_X$ . Again from $H^0(T_X)=0$ we get $v=0$, i.e.
$\mathrm{Im}(\Phi )\subset \Ll \otimes T_X$ and so $\Phi ^2=0$.
\end{proof}

\begin{remark}\label{oo1}
Assume $H^0(T_X)\ne 0$ with a fixed non-zero section $\sigma \in H^0(T_X)$. Fix an open subset $U\subset X$, where $T_X$ is trivial and let $\partial _1$, $\partial _2$ be a basis of $T_U\cong \Oo _U^{\oplus 2}$. If $\Phi : \Ll \rightarrow \Ll\otimes T_X$ is the map induced by $\sigma$ for a line bundle $\Ll$, then we may write $\Phi _{|U} = f_1(z)\partial _1+f_2(z)\partial _2$ with $f_i(z)\in H^0(\Oo _U)$. We have $\Phi \wedge \Phi  = (f_1f_2-f_2f_1)\partial _1\wedge \partial _2=0$ (see \cite[Remark 2.29]{VC} and \cite{VC1}) and so $(\Ll, \Phi)$ is a co-Higgs bundle. For a fixed integer $r\ge 1$, let $(\Ee, \Phi)$ denote the co-Higgs bundle which is the direct sum of $r$ copies of $(\Ll^{\oplus r}, \sigma)$. Then $\Phi$ is injective and so it is not nilpotent. Notice that $(\Ee ,\Phi )$ is strictly semistable for any polarization on $X$.
\end{remark}

\begin{remark}
Let $\Ee$ be a vector bundle of rank two on $X$, which is not semistable with respect to $\Oo _X(1)$ with the following destabilizing sequence
\begin{equation}\label{eqaa1}
0 \to \Ll \to \Ee \to \Ii _Z\otimes \Aa \to 0.
\end{equation}
Assume the existence of a co-Higgs field $\Phi$ on $X$ such that $(\Ee ,\Phi )$ is semistable. Clearly we get $\Phi \ne 0$. Since $h^0(T_X(-2)) =0$ by Lemma \ref{t1}, we have $h^0(\Ii _Z\otimes \Aa\otimes(\Ll^\vee)^{\otimes 2}(2)) >0$. Later in Lemma \ref{lem4} we see that $(\Ee ,\Phi )$ is associated to a foliation by rational curves. 
\end{remark}

For a vector bundle $\Ee$ of rank two on $X$, let us choose $\Rr \in \mbox{Pic}(X)$ so that $\Ff := \Ee \otimes \Rr$ has a non-zero section with a $2$-codimensional zero locus $Z$ and an exact sequence
\begin{equation}\label{eqar2}
0 \to \Oo _X \to \Ff \to \Ii _Z \otimes \Ll\to 0
\end{equation}
with $\Ll := \det (\Ff )=\det (\Ee )\otimes \Rr^{\otimes 2}\in \mbox{Pic}(X)$. Note that there is an obvious bijection between the co-Higgs fields (and the co-Higgs integrable structures) of $\Ee$ and $\Ff$. The {\it {strict order of instability}} of $\Ee$ is the maximal integer $k$ such that $h^0(\Ll(-k)) >0$ for all possible sequences (\ref{eqar2}) and $\Rr$, denoted by $\mathrm{ord}^{\mathrm{in}}(\Ee)$. In particular, the strict order of instability is invariant under the twist by line bundles. 


\begin{proposition}\label{t2}
Let $\Ee$ be an unstable bundle of rank two on $X$ with $\mathrm{ord}^{\mathrm{in}}(\Ee)\le -3$. Then $(\Ee, \Phi)$ is not stable for any co-Higgs field $\Phi$.  
\end{proposition}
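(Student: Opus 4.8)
The plan is to prove the slightly stronger statement that the maximal destabilizing line subbundle $\Ll\subset\Ee$ is $\Phi$-invariant, i.e.\ $\Phi(\Ll)\subset\Ll\otimes T_X$, for \emph{every} co-Higgs field $\Phi$. This suffices: $\Ll$ is saturated with $\deg\Ll>\tfrac{1}{2}\deg\Ee$, so once it is $\Phi$-invariant it violates the inequality in Definition~\ref{ss1}, hence $(\Ee,\Phi)$ is not even semistable, a fortiori not stable.

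To prove the claim I would argue by contradiction. Write the destabilizing sequence of $\Ee$ in the shape (\ref{eqaa1}),
$$0\to\Ll\to\Ee\to\Ii_Z\otimes\Aa\to0,\qquad \Aa:=\det(\Ee)\otimes\Ll^\vee,$$
where $Z$ has codimension at least two and $\deg\Aa<\tfrac{1}{2}\deg\Ee<\deg\Ll$ with respect to $\Oo_X(1)$. Suppose $\Phi(\Ll)\not\subset\Ll\otimes T_X$. Tensoring the sequence by the locally free sheaf $T_X$ stays exact and identifies $\Ll\otimes T_X$ with the kernel of $\Ee\otimes T_X\twoheadrightarrow(\Ii_Z\otimes\Aa)\otimes T_X$, so the composite
$$\Ll\hookrightarrow\Ee\xrightarrow{\ \Phi\ }\Ee\otimes T_X\twoheadrightarrow(\Ii_Z\otimes\Aa)\otimes T_X$$
is a nonzero sheaf homomorphism; hence $h^0(\Ii_Z\otimes\Aa\otimes\Ll^\vee\otimes T_X)>0$ and in particular $h^0(\Aa\otimes\Ll^\vee\otimes T_X)>0$.

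Next I would feed in Lemma~\ref{t1}: global generation of $\Omega^1_X(2)$ gives a surjection $\Oo_X^{\oplus N}\twoheadrightarrow\Omega^1_X(2)$, which dualizes (as $\Omega^1_X$ is locally free) to $T_X(-2)\hookrightarrow\Oo_X^{\oplus N}$, i.e.\ $T_X\hookrightarrow\Oo_X(2)^{\oplus N}$. Tensoring with the line bundle $\Aa\otimes\Ll^\vee$ and taking global sections converts the previous inequality into $h^0(\det(\Ee)\otimes(\Ll^\vee)^{\otimes2}(2))>0$, using $\Aa\otimes\Ll^\vee=\det(\Ee)\otimes(\Ll^\vee)^{\otimes2}$. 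On the other hand, twisting the displayed sequence by $\Ll^\vee$ presents it as an instance of (\ref{eqar2}) with $\Rr=\Ll^\vee$, whose distinguished line bundle is $\det(\Ff)=\det(\Ee)\otimes(\Ll^\vee)^{\otimes2}$; by the definition of the strict order of instability, any section of $\det(\Ee)\otimes(\Ll^\vee)^{\otimes2}(-k)$ forces $k\le\mathrm{ord}^{\mathrm{in}}(\Ee)\le-3$, and taking $k=-2$ contradicts the inequality just obtained. Hence $\Phi(\Ll)\subset\Ll\otimes T_X$, as wanted.

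The step I expect to be the main obstacle is the bookkeeping in that last paragraph: one has to be sure that the maximal destabilizing sequence of $\Ee$ is the admissible sequence (\ref{eqar2}) that realizes—or at least bounds from above—$\mathrm{ord}^{\mathrm{in}}(\Ee)$ among all choices of $\Rr$, so that the hypothesis $\mathrm{ord}^{\mathrm{in}}(\Ee)\le-3$ genuinely constrains this particular $\Ll$, and one must keep every twist by $\Ll^\vee$ aligned with the normalization behind (\ref{eqar2}). There is also a margin to watch: the crude middle step only consumes $\mathrm{ord}^{\mathrm{in}}(\Ee)\le-2$, so if the statement as given really needs $-3$ one should strengthen it to the sharper bound $h^0(\Ii_Z\otimes\Aa\otimes(\Ll^\vee)^{\otimes2}(2))>0$ recorded in the Remark preceding the statement (which squeezes extra positivity out of the integrability $\Phi\wedge\Phi=0$, equivalently the foliation associated with $(\Ee,\Phi)$), and then carefully propagate its additional twist by $\Ll^\vee$ through the order-of-instability estimate. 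The remaining ingredients—dualizing the global generation of $\Omega^1_X(2)$, tensoring exact sequences with $T_X$ and with line bundles, and the cohomological bookkeeping—are routine.
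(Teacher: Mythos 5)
Your argument is correct and is essentially the paper's own proof, phrased contrapositively: both take the maximal destabilizing (saturated) line subbundle $\Ll\subset\Ee$, use the hypothesis $\mathrm{ord}^{\mathrm{in}}(\Ee)\le -3$ to get $h^0(\det(\Ee)\otimes(\Ll^\vee)^{\otimes 2}(2))=0$, and dualize the global generation of $\Omega^1_X(2)$ from Lemma \ref{t1} to kill the induced map $\Ll\rightarrow \Ii_Z\otimes\det(\Ee)\otimes\Ll^\vee\otimes T_X$, so that $\Ll$ is $\Phi$-invariant and destabilizes $(\Ee,\Phi)$. Only your closing aside is off by one: producing a section of $\det(\Ee)\otimes(\Ll^\vee)^{\otimes 2}(2)$ yields $\mathrm{ord}^{\mathrm{in}}(\Ee)\ge -2$, which contradicts $\le -3$ but not $\le -2$, so the hypothesis as stated is exactly what your middle step uses and no strengthening via $\Ii_Z$ is required.
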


\begin{proof}
Take a maximal destabilizing line subbundle $\Rr$ of $\Ee$ and consider (\ref{eqar2}). It is enough to show that $\Phi(\Rr^\vee )\subset \Rr^\vee \otimes T_X$ for every nonzero co-Higgs field $\Phi: \Ee \rightarrow \Ee \otimes T_X$. Then $\Rr^\vee$ would destabilize the co-Higgs bundle $(\Ee ,\Phi)$. Note that $\mathrm{ord}^{\mathrm{in}}(\Ee)\le 1$ implies that $H^0(\Ll(2))=0$. With notations above, let $\Phi': \Ff \rightarrow \Ff \otimes T_X$ be the nonzero map induced by $\Phi$. Then we need to prove that $\Phi'(\Oo _X) \subset \Oo_X\otimes T_X$, i.e. that the induced map $\Oo _X \rightarrow  \Ii _Z\otimes \Ll\otimes T_X$ is a zero map. Since $H^0(\Ll (2))=0$, so we have $H^0(\Ii_Z \otimes \Ll (2))=0$. Since $\Omega_X^1 (2)$ is globally generated by Lemma \ref{t1}, we have $H^0(\Ii_Z \otimes \Ll \otimes T_X)=0$. 
\end{proof}

Proposition \ref{t2} is sharp, as shown in \cite[Remarks 3.1, 5.1 and Proposition 5.1]{R1} and \cite[Proposition 5.1]{Rayan}. When $\kappa (X)$ is non-negative, Proposition \ref{t2} may be improved in the following way, applying Definition \ref{ss1} with $\Hh$ instead of $\Oo_X(1)$. 



\begin{theorem}\label{t2.10}
Let $X$ be a smooth projective variety with $\kappa (X) \ge 0$. For any ample line bundle $\Hh$ on $X$, If $\Ee$ is a torsion-free sheaf which is not $\Hh$-semistable, then no co-Higgs sheaf $(\Ee ,\Phi)$ is $\Hh$-semistable.
\end{theorem}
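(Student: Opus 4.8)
The plan is to reduce the statement to the bundle case of Proposition~\ref{t2} by combining the maximal-destabilizing construction used there with the extra positivity coming from $\kappa(X)\ge 0$. First, since $\Ee$ is not $\Hh$-semistable, let $\Ff\subset\Ee$ be the maximal $\Hh$-destabilizing subsheaf; replacing $\Ee$ by its reflexive hull and $\Ff$ by its saturation we may assume both are reflexive and, away from a codimension $\ge 2$ set, work with the quotient sequence $0\to\Ff\to\Ee\to\Qq\to 0$. Because $\Ff$ is the maximal destabilizing subsheaf, it is itself $\Hh$-semistable with $\mu_\Hh(\Ff)>\mu_\Hh(\Ee)>\mu_\Hh(\Qq)$. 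The goal is to show that any co-Higgs field $\Phi$ preserves $\Ff$, i.e.\ $\Phi(\Ff)\subset\Ff\otimes T_X$, which would make $\Ff$ a $\Phi$-invariant destabilizing subsheaf and contradict $\Hh$-semistability of $(\Ee,\Phi)$.

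The key input is the generic nefness of $\Omega^1_X$ for non-uniruled $X$, which applies here since $\kappa(X)\ge 0$ forces $X$ to be non-uniruled (this is the ``generic nefness of the cotangent bundle'' advertised in the introduction, via Miyaoka-type results). Concretely, restricting to a general complete-intersection curve $C\in|m\Hh|$ for $m\gg 0$, the bundle $\Omega^1_X|_C$ is nef, hence every quotient of $T_X|_C$ has nonpositive degree, so $\mu_{\Hh,\max}(T_X)\le 0$ in the sense of the slope of the maximal destabilizing quotient along such curves. Now the composition $\Ff\to\Ee\xrightarrow{\Phi}\Ee\otimes T_X\to\Qq\otimes T_X$ is a sheaf map $\psi\colon\Ff\to\Qq\otimes T_X$; I want to show $\psi=0$. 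Since $\Ff$ is $\Hh$-semistable of slope $\mu_\Hh(\Ff)$, any nonzero image sheaf of $\psi$ has $\Hh$-slope $\ge\mu_\Hh(\Ff)$. On the other hand $\Qq\otimes T_X$ has $\mu_{\Hh,\max}\le\mu_\Hh(\Qq)+\mu_{\Hh,\max}(T_X)\le\mu_\Hh(\Qq)<\mu_\Hh(\Ff)$, using generic nefness of $\Omega^1_X$ to bound $\mu_{\Hh,\max}(T_X)\le 0$. This numerical incompatibility forces $\psi=0$, hence $\Phi(\Ff)\subset\Ff\otimes T_X$.

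Finally, with $\Ff$ a $\Phi$-invariant subsheaf of slope strictly greater than $\mu_\Hh(\Ee)$, Definition~\ref{ss1} is violated, so $(\Ee,\Phi)$ cannot be $\Hh$-semistable; this proves the theorem (and in particular Theorem~1, the $\Hh$-semistable-implies-$\Ee$-semistable statement, by contraposition). The main obstacle I anticipate is making the slope bound $\mu_{\Hh,\max}(T_X)\le 0$ precise and functorial enough: the cleanest route is to invoke the theorem (Miyaoka/Shepherd-Barron, as cited) that for $X$ not uniruled $\Omega^1_X$ is generically nef with respect to every polarization, then pass to the Harder--Narasimhan filtration of $T_X$ and observe that all its slopes along general curves in $|m\Hh|$ are $\le 0$; one must check that the relevant semistability and maximal-destabilizing notions behave well under the restriction to such curves and under tensoring $\Qq\otimes T_X$, which is routine but needs the Mehta--Ramanathan theorem to transfer between the variety and general complete-intersection curves. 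A secondary technical point is handling torsion-freeness rather than local freeness of $\Ee$: one reduces to the reflexive hull, uses that $\Phi$ extends across codimension $\ge 2$, and notes that destabilizing subsheaves of the reflexive hull restrict to destabilizing subsheaves of $\Ee$ off a small set, which is enough for the slope inequality in Definition~\ref{ss1}.
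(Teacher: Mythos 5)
Your proposal is correct and follows essentially the same route as the paper: both isolate the first step $\Ff=\Ee_1$ of the Harder--Narasimhan filtration, invoke Miyaoka/Shepherd-Barron generic nefness of $\Omega^1_X$ (available since $\kappa(X)\ge 0$ excludes uniruledness) together with Mehta--Ramanathan restriction to a general complete-intersection curve in $|m\Hh|$, and kill the induced map $\Ff\to(\Ee/\Ff)\otimes T_X$ by a degree/slope comparison, so that $\Ff$ is $\Phi$-invariant and destabilizes $(\Ee,\Phi)$. One small correction: since $\Qq=\Ee/\Ff$ need not be semistable, your displayed bound should read $\mu_{\Hh,\max}(\Qq\otimes T_X)\le\mu_{\Hh,\max}(\Qq)+\mu_{\Hh,\max}(T_X)$, which still yields the required strict inequality because maximality of the destabilizing subsheaf gives $\mu_{\Hh,\max}(\Qq)<\mu_\Hh(\Ff)$.
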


\begin{proof}
Set $n:= \dim (X)$ and $r:= \mathrm{rank}(\Ee)$, and let $0=\Ee_0\subset\Ee _1\subset \cdots \subset \Ee _k=\Ee$ be the Harder-Narasimhan filtration of $\Ee$ (see \cite{HL}). Since $\Ee$ is not semistable, we have $r\ge 2$ and $k\ge 2$. We may also assume $n\ge 2$, since the case $n=1$ is known in \cite{R1, Rayan}. If $\Phi (\Ee _1)\subset \Ee _1\otimes T_X$, then $(\Ee ,\Phi )$ is not semistable and so we may also assume that $\Phi (\Ee _1)\nsubseteq \Ee _1\otimes T_X$. Therefore $\Phi _{|\Ee _1}$ induces a non-zero map $u: \Ee _1\rightarrow(\Ee/\Ee _1)\otimes T_X$. Set $\Aa := \mathrm{Im}(u)$ and then it is torsion-free. Let $0=\Aa_0\subset\Aa_1\subset \cdots \subset \Aa _s =\Aa$ be the Harder-Narasimhan filtration of $\Aa$. Two of the properties of the Harder-Narasimhan filtration say that 
\begin{itemize}
\item $\Ee _i/\Ee _1$ is torsion free for $i=2,\dots ,k$ and 
\item $0\subset \Ee _2/\Ee _1\subset \cdots \subset \Ee _/\Ee _1$ is the Harder-Narasimhan filtration of $\Ee /\Ee _1$. 
\end{itemize}
Since $\Aa$ is a quotient of the semistable sheaf $\Ee _1$, the normalized Hilbert polynomial of $\Aa _1$ and each subquotients $\Aa _{i+1}/\Aa _i$ with $0\le i<s$, is at least the one of $\Ee _1$ and so bigger than the ones of $\Ee _i/\Ee _{i-1}$ for $i=2,\dots ,k$.

Fix an integer $m\gg 0$ so that $\Hh^{\otimes m}$ is very ample and take a general complete intersection $C\subset X$ of $n-1$ elements of $|\Hh^{\otimes m}|$. By \cite{Miya}, \cite{SB} or \cite[Theorem 4.1]{Pet}, ${\Omega _X^1}_{|C}$ is nef. Since $\Ee_i$, $\Ee_i/\Ee_{i-1}$, $\Aa_j$ and $\Aa_j/\Aa_{j-1}$ are all torsion-free for $i=1,\ldots, k$ and $j=1, \ldots,s$, they are locally free outside a finite union of two-codimensional subvarieties of $X$. Thus for a general $C$ all these sheaves are locally free in a neighborhood of $C$. Since $u$ is non-zero, we get a non-zero map $v: \Aa _{|C} \rightarrow (\Ee /\Ee _1)\otimes T_{X|C}$ and so a non-zero map $w: {\Omega _X^1}_{|C} \rightarrow \Aa ^\vee _{|C}\otimes {\Ee /\Ee _1}_{|C}$.  By \cite{MR} the restrictions $\Aa _
 {1|C}$, ${\Ee _i/\Ee _{i-1}}_{|C}$, and ${\Aa _j/\Aa _{j-1}}_{|C}$ for $i=2,\dots ,k$ and $j=2,\ldots, s$, are all semistable, i.e. the restriction to $C$ of the Harder-Narasimhan filtrations of $\Ee$ and $\Aa$ are the Harder-Narasimhan filtrations of the vector bundles $\Ee _{|C}$ and $\Aa _{|C}$, respectively. Note that 
\begin{itemize}
\item the slope of the tensor product of two vector bundles on $C$ is the sum of their slopes and
\item the tensor product of two semistable bundles on $C$ is semistable (see \cite[Corollary 6.4.14]{Lazarsfeld}).
\end{itemize}
Thus all the Harder-Narasimhan subquotients of $\Aa ^\vee _{|C}\otimes (\Ee /\Ee _1)_{|C}$ are semistable vector bundles of negative degree. Let $0=\Ff_0\subset \Ff _1\subset \cdots \subset \Ff _h$ be the Harder-Narasishan filtration of $\Aa ^\vee _{|C}\otimes (\Ee /\Ee _1)_{|C}$ and let $l$ be the minimal integer such that $\mathrm{Im}(w) \subseteq \Ff _l$. Since ${\Omega ^1_X}_{|C}$ is nef, any quotient of ${\Omega ^1_X}_{|C}$ has non-negative degree. If $l=1$, every non-zero subsheaf of $\Ff _1$ has negative degree, since $\Ff _1$ is semistable with $\deg (\Ff_1)<0$, a contradiction. If $l>1$, then we get a contradiction taking the composition of $w$ with the surjection $\Ff _l\rightarrow \Ff _l/\Ff _{l-1}$.
\end{proof}


As shown in \cite{Rayan}, many unstable vector bundles such as decomposable ones may give stable co-Higgs bundles. The next observation and Lemma \ref{t1} shows in particular that we cannot increase too much the stability. In case of stable bundles this phenomenon may be measured by the following observation.

\begin{remark}\label{t3}
In (\ref{eqar2}) assume $h^0(\Ii _Z\otimes \Ll\otimes T_X) =0$, e.g. $h^0(\Ii _Z\otimes \Ll(2))=0$, and then every co-Higgs field on $\Ff$ preserves $\Oo _X$. Thus for fixed $\Ll$ and ``~large~'' $Z$ we have several examples in which any co-Higgs field cannot be more stable than the order of stability
represented by the non-zero map $\Oo _X\rightarrow \Ff$ in (\ref{eqar2}).
\end{remark}

Note that for any line bundle $\Aa\in \Pic (X)$, a co-Higgs bundle $(\Ee ,\Phi )$ is semistable or stable if and only if $(\Ee \otimes \Aa ,\Phi _\Aa)$ has the same property, where $\Phi _\Aa$ is induced by $\Phi$ by tensoring with $\Aa$. So in case of rank two, we may reduce many problems to the case in which $\det (\Ee )$ is in a prescribed class of $\mathrm{Pic}(X)/2\mathrm{Pic}(X)$. For many surfaces we have  $2\mathrm{Pic}(X)\subsetneq \mathrm{Pic}(X)$ and so we cannot reduce all problems to the case in which $\det (\Ee )\cong \Oo _X$, e.g. if $X$ is a surface of general type, which is not minimal. This explains why we extend \cite[Theorem 7.1]{R1} in the following way; we also exclude the strictly semistable case.

\begin{proposition}\label{tt5}
Let $X$ be a surface of general type. Then there is no semistable co-Higgs bundle $(\Ee,\Phi)$ of rank two with $\Phi \ne 0$.
\end{proposition}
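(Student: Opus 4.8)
The plan is to reduce to the two cases that can survive: either $\Ee$ is semistable, or $\Ee$ is unstable. We first dispose of the unstable case using the machinery already in the excerpt. If $(\Ee,\Phi)$ is semistable with $\Ee$ unstable, then by Theorem \ref{t2.10} we would need $\kappa(X)<0$; but a surface of general type has $\kappa(X)=2\ge 0$, so $\Ee$ must be semistable. (Alternatively, if one prefers a self-contained surface argument, one can combine Proposition \ref{t2} with the fact that on a surface of general type $\mathrm{ord}^{\mathrm{in}}(\Ee)$ is forced to be $\le -3$ for any unstable rank-two bundle, using that $K_X$ is big and nef so that $H^0(\Ll(2))$ vanishes for the relevant $\Ll$; but invoking Theorem \ref{t2.10} directly is cleaner.)

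So assume $\Ee$ is semistable of rank two and $\Phi\ne 0$. After twisting by a line bundle (which by the remark preceding the statement changes neither semistability nor the existence of a nonzero co-Higgs field), and after tracing out, we may reduce to the trace-zero case, so $\Phi\in H^0(\mathcal{E}nd_0(\Ee)\otimes T_X)$. The key point is to bound $\Phi$ via $\mathcal{E}nd_0(\Ee)$. Since $\Ee$ is semistable of rank two, $\mathcal{E}nd_0(\Ee)$ is semistable of slope $0$ with respect to $\Oo_X(1)$ (tensor/dual of semistable sheaves on a surface, after restricting to a general complete-intersection curve as in the proof of Theorem \ref{t2.10}). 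Then $\Phi$ gives, on a general smooth curve $C\in|\Oo_X(m)|$ for $m\gg0$, a nonzero map $\Omega^1_{X|C}\to \mathcal{E}nd_0(\Ee)_{|C}$. Here is where general type enters: because $K_X$ is ample after contracting the $(-1)$-curves and $C$ is general, $\Omega^1_{X|C}$ has all Harder–Narasimhan slopes positive (indeed its determinant is $(K_X\cdot C)>0$ and one shows positivity of the quotients, e.g. by \cite{MR} together with bigness/nefness of $K_X$). A nonzero map from a bundle with positive HN-slopes to a bundle with zero slope and negative top HN-slope is impossible by the same semistability argument used at the end of the proof of Theorem \ref{t2.10}: the image would be a quotient of $\Omega^1_{X|C}$ of nonnegative degree sitting inside a semistable (or HN-graded) bundle of degree $\le 0$, forcing the image into the slope-$0$ part, and then iterating kills it. Hence $\Phi_{|C}=0$, and since $C$ moves in a base-point-free linear system covering $X$, $\Phi=0$, contradiction.

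The main obstacle is the positivity statement for $\Omega^1_{X|C}$: for a non-minimal surface of general type $\Omega^1_X$ itself need not be nef (the $(-1)$-curves are obstructions), so one cannot simply cite generic nefness and conclude. What one actually needs is that the restriction to a general high-degree curve has \emph{strictly} positive slopes, or at least that it has no quotient of degree $\le 0$ except possibly in a controlled way; this can be extracted from the fact that on the minimal model $K_Y$ is ample, pulling back, and noting that a general $C$ avoids the exceptional locus, so $\Omega^1_{X|C}\cong \Omega^1_{Y|C'}$ with $K_Y$ ample, whence $\det$ and all HN-quotients of $\Omega^1_{X|C}$ have positive degree. Handling the subtlety that $\mathcal{E}nd_0(\Ee)_{|C}$ may be strictly semistable (so has degree-$0$ sub/quotients) — which is why the argument must be the iterated HN-argument rather than a one-line slope inequality — is the remaining technical care needed, and it is exactly parallel to the endgame already written out in the proof of Theorem \ref{t2.10}.
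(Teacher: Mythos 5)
Your architecture (dispose of unstable $\Ee$ by Theorem \ref{t2.10}, then for semistable $\Ee$ view $\Phi$ as a nonzero map $\Omega^1_X\rightarrow \mathcal{E}nd(\Ee)$ and compare slopes on a general high-degree curve $C$) is sound, but there is a genuine gap at the one step that carries all the weight: the claim that $(\Omega^1_X)_{|C}$ has all Harder--Narasimhan slopes positive, i.e.\ is ample. Your proposed justification does not work. On a minimal surface of general type $K_Y$ is big and nef but in general not ample (there may be $(-2)$-curves), and, more importantly, even granting positive determinant, ``generically nef with positive determinant'' does not imply $\mu_{\min}>0$: the bundle $\Oo_C\oplus\Oo_C(1)$ on a curve is nef with positive determinant and has the degree-zero quotient $\Oo_C$. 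Excluding a degree-zero quotient of $(\Omega^1_X)_{|C}$ is precisely the nontrivial content of the generic ampleness theorem for non-uniruled varieties with big canonical class; this is what the paper invokes, via \cite[Theorem 4.11]{Pet} and \cite{Miya}, in its own proof of Proposition \ref{tt5} (``$(\Omega^1_X)_{|C}$ is ample''). As written your derivation of the key positivity is false; citing the Miyaoka--Peternell result closes the gap, and nothing short of such a theorem will.

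With that citation in place your proof is correct and genuinely different from the paper's. The paper does not split according to semistability of $\Ee$: following \cite[Theorem 7.1]{R1} it produces the kernel line bundle $\Ll=\ker\Phi$, shows $\Phi(\Ee)\subset\Ll\otimes T_X$ (so $\Phi$ is nilpotent), uses semistability of the pair to get $\deg\Ll\le\deg\Aa$, and then ampleness of $(\Omega^1_X)_{|C}$ kills the induced nonzero section of $(\Ll\otimes\Aa^\vee\otimes T_X)_{|C}$. Your semistable case is a clean one-line slope comparison via \cite{MR}: the image of $(\Omega^1_X)_{|C}\rightarrow \mathcal{E}nd(\Ee)_{|C}$ would be a quotient of a bundle with $\mu_{\min}>0$ inside a slope-zero semistable bundle; no iterated HN argument is needed, and in fact no trace-zero reduction either, since $\mathcal{E}nd(\Ee)_{|C}$ itself is slope-zero semistable (your phrase ``negative top HN-slope'' for $\mathcal{E}nd_0(\Ee)_{|C}$ is off --- its slope is zero --- but the argument only needs that subsheaves have slope $\le 0$). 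Finally, drop the parenthetical alternative for the unstable case: on a surface of general type $\mathrm{ord}^{\mathrm{in}}(\Ee)\le-3$ is not forced for an arbitrary unstable rank-two bundle, and Proposition \ref{t2} would in any case only exclude stability, not semistability; the reduction via Theorem \ref{t2.10} is the right one.
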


\begin{proof}
Since $X$ is of general type, we have $H^0(T_X)=0$ and so $\Phi$ has trace zero. The integrability $\Phi \wedge \Phi =0$ gives a map $\Phi \circ \Phi : \Ee \rightarrow \Ee \otimes S^2T_X$. As in the proof of \cite[Theorem 7.1, page 148--149]{R1}, we get $\Ll := \mathrm{ker}(\Phi )\ne 0$. Since $\Phi$ is not trivial, $\Ll$ is a line bundle and $\Ee$ fits into an exact sequence
\begin{equation}\label{eqaa3}
0 \to \Ll \to \Ee \to \Ii _Z\otimes \Aa \to 0
\end{equation}
with $Z$ a zero-dimensional subscheme and $\Aa$ a line bundle. The proof of loc. cit. also gives that $\Phi (\Ee )\subset \Ll \otimes T_X$ and so $\Phi \circ \Phi =0$. Since $\Phi (\Ll )=0 \subset \Ll\otimes T_X$ and $(\Ee ,\Phi ) $ is semistable, we have $\deg \Ll \le \deg \Aa$. Note that $\Phi$ induces a non-zero map $u: \Oo _X\rightarrow \Bb \otimes T_X$ with $\Bb := \Ll \otimes \Aa ^\vee$. In particular, for a fixed integer $m\gg 0$ and a general $C\in |\Oo _X(m)|$, we have $u_{|C} \ne 0$. On the other hand, by \cite[Theorem 4.11]{Pet} and \cite{Miya} the vector bundle $(\Omega ^1_X)_{|C}$ is ample, and so $h^0(C,(\Bb \otimes T_X)_{|C})=0$, a contradiction.
\end{proof}

\section{Rank two co-Higgs bundles on surfaces and foliations}
\noindent From now on we always assume that $X$ is a smooth projective surface. 

\begin{remark}
In general, for a co-Higgs bundle $(\Ee, \Phi)$ with a non-injective $\Phi \ne 0$, the sheaf $\Ll  := \mathrm{ker}(\Phi)$ is saturated of rank one in $\Ee$ and so $\Ee$ fits in an exact sequence (\ref{eqaa3}) with $\det (\Ee ) \cong \Ll \otimes \Aa$ and $Z$ a zero-dimensional subscheme. Note that $\Phi$ is nilpotent if and only if $\Phi (\Ee )\subset \Ll\otimes T_X$. Assume that $\Phi$ is nilpotent. From $\Phi (\Ll )=0$, $\Phi $ induces a map $\phi : \Ii _Z\otimes \Aa \rightarrow \Ll \otimes T_X$ with $\mathrm{Im}(\Phi ) = \mathrm{Im}(\phi)$. If $\deg \Ll > \deg \Aa$ (resp. $\deg \Ll \ge \deg \Aa$), then both $\Ee$ and $(\Ee ,\Ll )$ are not semistable (resp. not stable). Now assume $\deg \Ll \le \deg \Aa$ (resp. $\deg \Ll < \deg \Aa$) and that $(\Ee ,\Phi)$ is not stable (resp. semistable). Then there is a saturated line bundle $\Ll' \subset \Ee$ such that $\Phi (\Ll' )\subset \Ll' \otimes T_X$ and $\deg \Ll'  > \deg \Ll$. Since $\Phi (\Ee )\subset \Ll\otimes T_X$, we get $\Ll' \otimes T_X\subseteq \Ll \otimes T_X$, contradicting the inequality $\deg \Ll'  > \deg \Ll$. Hence in case of $\deg \Ll \le \deg \Aa $, we are in the set-up of \cite{Correa}. 
\end{remark}

\begin{remark}
Assume $H^0(T_X)\ne 0$ and take a non-zero section $\sigma \in H^0(T_X)$. For $\Ee := \Oo _X\oplus \Oo _X$ with a fixed basis $\{e_1,e_2\}$ of $H^0(\Ee)$, define $\Phi : \Ee \rightarrow \Ee \otimes T_X$ to be induced by $\begin{pmatrix}
0 & \sigma\\
0 & 0
\end{pmatrix}$. Then $\Phi$ is a non-trivial nilpotent field whose associated foliation is the saturated foliation associated to $\sigma$.
\end{remark}

Let us assume for the moment that $X$ is a smooth and non-rational projective surface $X$ with negative Kodaira dimension $\kappa(X)=-\infty$. Let $u: X\rightarrow  Y$ be the minimal model of $X$ and then $u$ is a finite sequence of blow-ups of points. Denoting the Albanese variety of $X$ by $C$, we get that $C$ is a smooth curve of positive genus $q=h^1(\Oo _X)$ and there is a vector bundle $\Ff$ of rank two on $C$ such that $Y = \PP (\Ff)$. Here we may assume that $\Ff$ is initialized, i.e. $h^0(\Ff)>0$ and $h^0(\Ff \otimes \Ll^\vee)=0$ for all $\Ll \in \mathrm{Pic}^d(C)$ with $d\ge1$. If $\pi :Y\rightarrow C$ denotes the projection, then $f= \pi \circ u: X \rightarrow C$ is the Albanese map of $X$. Letting $T_f$ be the relative tangent sheaf of $f$, we have an exact sequence
$$0 \to T_f\to T_X \to \Ii _W\otimes (\omega _X \otimes T_f)^\vee \to 0$$ 
with $W$ a zero-dimensional subscheme of $X$ supported by the critical locus of $f$.

\begin{proposition}\label{lem4}
Let $(\Ee, \Phi)$ be a semistable co-Higgs bundle of rank two on a surface $X$ with $\Ee$ not semistable and $\Phi\ne 0$. Then we have $\kappa(X)=-\infty$ and $\Phi$ induces a meromorphic foliation on $X$ whose general leaf is a smooth rational curve. Moreover if we assume that $X$ is not rational, then there exists a nonnegative divisor $D$ and a line bundle $\Aa$ such that $\Ee$ fits into an exact sequence
\begin{equation}\label{eqa2}
0 \to T_f(-D)\otimes \Aa \to \Ee \to \Ii _Z\otimes \Aa \to 0
\end{equation}
with $\deg T_f(-D) >0$, where $f: X \rightarrow C$ is the Albanese mapping. 
\end{proposition}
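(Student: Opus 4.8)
The plan is to combine the destabilizing sequence for $\Ee$ with the fact that any co-Higgs field on $\Ff = \Ee \otimes \Rr$ preserves the subsheaf $\Oo_X$, and then read off the geometry. First I would take a maximal destabilizing line subbundle of $\Ee$ and, after the twist by a suitable $\Rr \in \Pic(X)$ as in (\ref{eqar2}), rewrite it as $0 \to \Oo_X \to \Ff \to \Ii_Z \otimes \Ll \to 0$ with $\Ll = \det(\Ff)$ and with $\deg \Ll$ large (here $\Ee$ not semistable forces a strict positivity). Since $(\Ee,\Phi)$ and $(\Ff,\Phi')$ have the same semistability, $(\Ff,\Phi')$ is semistable with $\Phi' \ne 0$; by the induced nonzero section of $H^0(\Ii_Z \otimes \Ll \otimes T_X)$ and Lemma \ref{t1} (giving $h^0(T_X(-2))=0$), we get $h^0(\Ii_Z \otimes \Ll \otimes (\Ll^\vee)^{\otimes 2}(2))>0$ exactly as in the Remark following Proposition \ref{t2}; in particular $h^0(\Ll(2)) > 0$, hence $\Phi'$ does \emph{not} preserve $\Oo_X$ (otherwise $\Oo_X$ would destabilize $(\Ff,\Phi')$, contradicting semistability), so $\Phi'$ induces a nonzero map $\Oo_X \to \Ii_Z \otimes \Ll \otimes T_X$, i.e. a nonzero section of $H^0(\Ii_Z \otimes \Ll \otimes T_X) \subset H^0(\Ll \otimes T_X)$.

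Next I would invoke Theorem \ref{t2.10}: since $\Ee$ is not semistable but $(\Ee,\Phi)$ is semistable, we must have $\kappa(X) = -\infty$. A nonzero section of $\Ll \otimes T_X$ with $\deg \Ll$ positive should be packaged as a subsheaf $\Ll^\vee \hookrightarrow T_X$ of positive degree after saturating; this is precisely a (singular) foliation on $X$ whose cotangent line bundle is positive, and standard facts (Miyaoka, or the discussion via \cite{SB} referenced in the introduction) then force the general leaf of this foliation to be rational. This gives the first two assertions.

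For the last assertion, assume $X$ is not rational; then, as recalled in the paragraph preceding the statement, $X$ has a minimal model $Y = \PP(\Ff_C)$ over its Albanese curve $C$ with $f = \pi \circ u : X \to C$, and there is the exact sequence $0 \to T_f \to T_X \to \Ii_W \otimes (\omega_X \otimes T_f)^\vee \to 0$. I would argue that the foliation produced above, having rational leaves, must coincide with the foliation by fibers of $f$ (a non-rational surface with $\kappa = -\infty$ carries a unique rational fibration, namely the Albanese map), so the saturated subsheaf $\Ll^\vee \hookrightarrow T_X$ of positive degree factors through $T_f$: writing $\Ll^\vee = T_f(-D)$ for an effective divisor $D$ (the difference between $T_f$ and its subsheaf), with $\deg T_f(-D) > 0$. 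Untwisting back from $\Ff$ to $\Ee$ introduces a line bundle $\Aa$, and the destabilizing sequence becomes (\ref{eqa2}).

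The main obstacle I anticipate is the passage from ``nonzero section of $\Ll \otimes T_X$ with $\Ll$ positive'' to ``foliation with rational general leaf'', and then to the identification of that foliation with the Albanese fibration: the first is the genuinely nontrivial input (it is where $\kappa(X)=-\infty$, rather than merely non-semistability, must be used, together with the Miyaoka-type nefness/positivity results cited in the proof of Theorem \ref{t2.10}), and the second requires knowing that on a non-rational surface the only foliation with rational leaves is the one by Albanese fibers — a structure statement that should follow from the classification of surfaces of negative Kodaira dimension but needs to be stated carefully. The bookkeeping of the various twists ($\Rr$, $\Aa$, $D$) is routine by comparison.
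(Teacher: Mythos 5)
Your proposal follows essentially the same route as the paper's proof: semistability of the pair forces $\Phi$ not to preserve the destabilizing line subbundle, the induced nonzero map gives a line subsheaf of $T_X$ of positive degree whose saturation is, by the Miyaoka--Shepherd-Barron theorem, the tangent sheaf of a foliation by rational curves, and for non-rational $X$ (so $q>0$) this foliation is the Albanese fibration, which after untwisting yields (\ref{eqa2}); your appeal to Theorem \ref{t2.10} for $\kappa(X)=-\infty$ is a harmless variant of the paper's implicit use of uniruledness. Only fix the sign bookkeeping: after normalizing the destabilizing subbundle to $\Oo_X$, instability of $\Ee$ forces $\deg \det(\Ff)<0$ (not $>0$), and it is precisely this negativity that makes the saturated subsheaf $\Ll^\vee \hookrightarrow T_X$ have positive degree; moreover the non-preservation of $\Oo_X$ follows directly from semistability of $(\Ff,\Phi')$, with $h^0(\Ll(2))>0$ being a consequence of the induced section rather than the reason for it.
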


\begin{proof}
Set $\Rr:=\det (\Ee)$. Since $(\Ee ,\Phi)$ is semistable and $\Ee$ is not, then there exists a line subbundle $\Ll\subset \Ee$ such that $\deg \Ll > \deg \Rr/2$ with an exact sequence
\begin{equation}\label{eqar1}
0 \to \Ll\to \Ee \to \Ii _Z\otimes \Rr \otimes \Ll ^\vee \to 0
\end{equation}
with $Z$ a zero-dimensional scheme. The assumption on $(\Ee ,\Phi)$ gives $\Phi (\Ll )\nsubseteq \Ll\otimes T_X$. Composing $\Phi _{|\Ll}$ with the map $\Ee\otimes T_X \rightarrow \Ii _Z\otimes \Rr \otimes \Ll^\vee \otimes T_X$ induced by (\ref{eqar1}), we get a non-zero map $v: \Ll \rightarrow \Ii _Z\otimes \Rr \otimes \Ll^\vee \otimes T_X$ and so an injective map $j: \Ll^{\otimes 2}\otimes \Rr ^\vee \rightarrow T_X$. If we let $\Nn \subset T_X$ be the saturation of $j(\Ll^{\otimes 2}\otimes \Rr ^\vee)$, then we have $\Nn \cong \Ll^{\otimes 2}\otimes \Rr ^\vee (D)$ with either $D=\emptyset$ or $D$ an effective divisor. In particular, we get $\deg \Nn \ge \deg \Ll^{\otimes 2}\otimes \Rr ^\vee  > 0$. By a theorem of Miyaoka and Shepherd-Barron (see \cite{SB}), $\Nn$ is the tangent sheaf of a meromorphic foliation by curves with a rational curve as a general leaf. Since $q$ is positive, this foliation is induced by the Albanese map $f: X\rightarrow C$, i.e. $\Nn\cong T_f$, and we may take $\Aa := \Rr \otimes \Ll^\vee$.  
\end{proof}

\section{Surfaces with $H^0(T_X)=H^0(S^2T_X)=0$}

Recall that if there is no non-trivial global tangent vector field, then the existence of unstable co-Higgs bundle of rank two implies that the co-Higgs field is nilpotent by Lemma \ref{tt6}. On the other hand, it is observed in Lemma \ref{oo1} that any non-trivial global tangent vector field suggests an example of strictly semistable co-Higgs bundle of arbitrary rank with injective co-Higgs field. 

\begin{lemma}[\cite{R1, Correa}]\label{c4}
Let $X$ be a smooth surface such that $H^0(T_X)=H^0(S^2T_X)=0$. If $(\Ee, \Phi)$ is any co-Higgs bundle of rank two on $X$, then $\Phi$ is nilpotent. Moreover, if $(\Ee, \Phi)$ is stable and $\kappa (X)\ge0$, then we have $\Phi=0$. 
\end{lemma}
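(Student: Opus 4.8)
The plan is to prove Lemma~\ref{c4} in two parts, mirroring the structure of its statement. For the first claim, suppose $(\Ee,\Phi)$ is a rank two co-Higgs bundle with $\Phi\neq 0$. Since $H^0(T_X)=0$, the trace map sequence shows $\Phi$ has trace zero, so nilpotence of the endomorphism-valued field $\Phi$ is equivalent to $\det$-type vanishing: locally $\Phi$ is a $2\times 2$ matrix with entries in $T_X$, and $\Phi$ nilpotent means its characteristic polynomial is $t^2$, i.e. $\operatorname{tr}\Phi=0$ (which we have) and $\Phi\circ\Phi=0$ as a section of $\mathcal{E}nd(\Ee)\otimes S^2T_X$. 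The integrability $\Phi\wedge\Phi=0$ exactly says the $\wedge^2 T_X$-component of $\Phi\circ\Phi$ vanishes, so $\Phi\circ\Phi$ lands in $\mathcal{E}nd(\Ee)\otimes S^2T_X$; combined with $\operatorname{tr}\Phi=0$, the Cayley--Hamilton identity gives $\Phi\circ\Phi = -(\det\Phi)\cdot\operatorname{id}_{\Ee}$, where $\det\Phi\in H^0(S^2T_X)$. Since $H^0(S^2T_X)=0$, we conclude $\det\Phi=0$ and hence $\Phi\circ\Phi=0$, i.e. $\Phi$ is nilpotent. This is essentially the argument of \cite[Theorem 7.1]{R1}.

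For the second claim, assume in addition that $(\Ee,\Phi)$ is stable and $\kappa(X)\geq 0$, and suppose for contradiction that $\Phi\neq 0$. By the first part $\Phi$ is nilpotent, so $\Ll:=\ker(\Phi)$ is a saturated rank one subsheaf of $\Ee$, hence a line bundle, fitting into $0\to\Ll\to\Ee\to\Ii_Z\otimes\Aa\to 0$ with $\Aa:=\det(\Ee)\otimes\Ll^\vee$ and $Z$ zero-dimensional, and with $\Phi(\Ee)\subset\Ll\otimes T_X$ since $\Phi$ is nilpotent. Because $\Phi(\Ll)=0\subset\Ll\otimes T_X$ and $(\Ee,\Phi)$ is stable, we must have $\deg\Ll<\deg\Aa$. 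On the other hand, as $\Phi$ is nilpotent and $\Phi(\Ll)=0$, it induces a nonzero map $\phi:\Ii_Z\otimes\Aa\to\Ll\otimes T_X$, hence a nonzero section of $\Ll\otimes\Aa^\vee\otimes T_X$ vanishing on $Z$; in particular $H^0(\Bb\otimes T_X)\neq 0$ where $\Bb:=\Ll\otimes\Aa^\vee$ has $\deg\Bb<0$.

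The heart of the argument, and the place where $\kappa(X)\geq 0$ enters, is to rule out this section. Fix $m\gg 0$ so that $\Oo_X(m)$ is very ample, and let $C\in|\Oo_X(m)|$ be general; then $C$ is a smooth curve and by the theorem of Miyaoka--Shepherd-Barron \cite{Miya,SB} (cf. \cite[Theorem 4.1]{Pet}), since $X$ is not uniruled, the restriction $(\Omega^1_X)_{|C}$ is a nef vector bundle. Restricting the nonzero map $\Oo_X\to\Bb\otimes T_X$ to $C$ — which stays nonzero for general $C$ because its zero locus is zero-dimensional — and dualizing the $T_X$ factor, we get a nonzero map $(\Omega^1_X)_{|C}\to\Bb_{|C}$, exhibiting $\Bb_{|C}$ as (up to saturation) a quotient of the nef bundle $(\Omega^1_X)_{|C}$; hence $\deg(\Bb_{|C})\geq 0$. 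But $\deg(\Bb_{|C})=m\,(\Bb\cdot H)<0$ since $\Bb$ has negative degree with respect to the polarization, a contradiction. Therefore $\Phi=0$. I expect the main obstacle to be bookkeeping around generic local freeness and saturation on the general curve $C$ (ensuring the restricted map is genuinely nonzero and that ``quotient of a nef bundle has nonnegative degree'' applies to the possibly-non-saturated image), exactly the technical points handled carefully in the proof of Proposition~\ref{lem4} and Theorem~\ref{t2.10}; one can simply invoke those computations rather than redo them.
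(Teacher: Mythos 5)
Your proof is correct, but for the second assertion it follows a genuinely different route from the paper. For the first assertion you both do the same thing: the paper simply cites the argument of \cite[Theorem 7.1, page 148--149]{R1}, which is exactly the trace-zero Cayley--Hamilton computation you spell out ($\Phi\circ\Phi=-(\det\Phi)\,\id_{\Ee}$ with $\det\Phi\in H^0(S^2T_X)=0$, the integrability killing the $\wedge^2T_X$ component). For the second assertion, however, the paper's proof is a one-line citation: if $\Phi\ne 0$, then by \cite[Theorem 1.1]{Correa} the nilpotent pair $(\Ee,\Phi)$ is strictly semistable, contradicting stability. You instead give a self-contained argument: stability forces $\deg\Ll<\deg\Aa$ for $\Ll=\ker\Phi$, the nilpotent field induces a nonzero section of $\Bb\otimes T_X$ with $\Bb=\Ll\otimes\Aa^\vee$ of negative degree, and the generic nefness of $\Omega^1_X$ (Miyaoka--Shepherd-Barron, valid since $\kappa(X)\ge 0$ rules out uniruledness) restricted to a general high-degree curve $C$ gives a nonzero map from the nef bundle $(\Omega^1_X)_{|C}$ onto a line subsheaf of $\Bb_{|C}$ of negative degree, a contradiction. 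This is precisely the technique the paper uses elsewhere (Proposition \ref{tt5}, where ampleness of $(\Omega^1_X)_{|C}$ is available in the general type case, and Theorem \ref{t2.10}), adapted correctly to the weaker nefness available for $\kappa(X)\ge 0$; what it buys is independence from Corr\^ea's classification theorem, at the cost of redoing the restriction-to-a-general-curve bookkeeping. One small inaccuracy: your parenthetical reason that the section survives restriction to $C$ ``because its zero locus is zero-dimensional'' is not justified (a section of a rank two bundle may vanish along a divisor), but the conclusion stands since a general member of a very ample system is irreducible and not contained in the fixed proper closed zero locus, so this does not affect the proof.
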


\begin{proof}
Note that $H^0(T_X)=0$ implies that $\Phi$ is trace-free. The first assertion is from the proof of \cite[Theorem 7.1, page 148--149]{R1}. For the second, if $\Phi \ne 0$, then by \cite[Theorem 1.1]{Correa} $(\Ee, \Phi)$ is strictly semistable, a contradiction. 
\end{proof}

In \cite{Correa} the classification of smooth surfaces with semistable co-Higgs bundles of rank two with nilpotent co-Higgs fields is done. This together with Lemma \ref{c4} motivates to investigate the criterion for the vanishing $H^0(T_X)=H^0(S^2T_X)=0$. 

\begin{remark}\label{aaa0}
Let $X$ be a smooth projective surface. The space $H^0(T_X)$ of global tangent vector fields, is the tangent space of the algebraic group $\mathrm{Aut}^0(X)$ at the identity map $X\rightarrow X$. So if $\pi : X \rightarrow Y$ is the blow-up at $p\in Y$, then we get $H^0(X,T_X) \cong H^0(Y,\Ii _p\otimes T_Y)$. 
\end{remark}

\noindent Iterating the observation in Remark \ref{aaa0}, we get information on $H^0(T_X)$, with respect to $\kappa (X)$. Recall that any smooth rational surface has as its minimal model either $\PP^2$ or a Hirzebruch surface $\FF_e$ for $e\in \NN \setminus \{1\}$. 

\begin{enumerate}
\item Assume that $X$ is rational and that there is a birational morphism $X\rightarrow Y$ with $Y$ the blow-up of $\PP^2$ at four points of $\PP^2$, no three of them collinear. Then we get $H^0(T_X)=0$.

\item Let $\pi _i: \FF_0=\PP^1\times \PP^1 \rightarrow \PP^1$ for $i=1,2$, denote the projection on the $i$-th factor. Let $Y$ be the blow-up of $\FF_0$ at a finite set of points $S\subset \FF_0$ such that $\sharp (\pi _i(S))\ge 3$ for each $i$. Then we get $H^0(T_Y)=0$. Thus if there is a birational morphism $X\rightarrow Y$, then we also get $H^0(T_X)=0$.

\item We have $\FF_e = \PP (\Oo _{\PP^1}\oplus \Oo _{\PP^1}(-e))$ and every automorphism of $\FF_e$ preserves the ruling $\PP (\Oo _{\PP^1}\oplus \Oo _{\PP^1}(-e)) \rightarrow \PP^1$. Let us take $e>0$. If $S$ is a general subset of $\FF_e$ with $\sharp (S) \ge \max \{3,e+1\}$, then we get $h^0(\Ii _{S}\otimes T_{\FF_e})=0$ from $h^0(\mathcal{E}nd (\Oo _{\PP^1}\oplus \Oo _{\PP^1}(-e)) = 3+e$. If we assume the existence of a birational morphism $X \rightarrow Y$ with $Y$ the blow-up of $\FF_e$ at $S$, then we get $h^0(T_X)\le h^0(T_Y)=0$.

\item Let us assume $X$ is a non-rational surface with $\kappa(X)=-\infty$. Then we have $q:= h^1(\Oo _X)>0$ and the Albanese variety $f:X\rightarrow C$ of $X$ has as target a smooth curve $C$ of genus $q$ and $\PP^1$ as its general fiber. There exists a vector bundle $\Gg$ of rank two on $C$ and a finite sequence $u: X\rightarrow Y$ of blowing ups such that $\pi : Y = \PP(\Gg )\rightarrow C$ is $\PP^1$-bundle over $C$ and $f=\pi \circ u$. Here we may assume that $\Gg$ is initialized, i.e. $h^0(\Gg)>0$ and $h^0(\Gg \otimes \Ll^\vee)=0$ for all $\Ll \in \mathrm{Pic}^d(C)$ with $d\ge1$. If $q\ge 2$ and $\Gg$ is simple, then we get $H^0(T_Y)=0$ and so $H^0(T_X)=0$. If $\Gg$ is not simple, then we get an upper bound for $h^0(T_Y)$ in terms of $h^0(\mathcal{E}nd (\Gg ))$ and so, as in part ({c}), we get many $X$ with minimal model $Y$ and $h^0(T_X)=0$. This observation also applies in the case $q=1$, when $\Gg$ is decomposable. If $q=1$ and $\Gg$ is indecomposable, then we get $h^0(\mathcal{E}nd (\Gg ))=2$ by Atiyah's classification of vector bundles on elliptic curves.

\item Assume $\kappa (X) =0$ and let $u: X \rightarrow Y$ be the morphism to its minimal model. From the list of possible surface $Y$ in \cite[page 244]{BHPV} we have $H^0(T_Y) =0$, unless $Y$ is an Abelian surface or it has an Abelian surface as a finite unramified covering. If $Y$ is an Abelian surface, then we have $H^0(\Ii _p\otimes T_Y)=0$ for all $p\in Y$. Hence we get $H^0(T_X)\ne 0$ if and only if $X$ is either an Abelian surface or a bi-elliptic surface. In particular, we have $H^0(T_X)=0$ if $X$ is not a minimal model.

\item Let $Y$ be a projective surface with $\kappa (Y)=1$ and $h^0(T_Y)>0$. Since $Y$ has only finitely many rational curves and only a one-dimensional family of elliptic curves, we have $h^0(T_Y) =1$ and so $h^0(\Ii _p\otimes T_Y)=0$ for a general $p\in Y$. Thus we have $H^0(T_X)=0$ if $X$ has $Y$ as its minimal model, but it factors through a blow-up of $Y$ at a general point of $Y$. We saw that $H^0(T_X)=0$ if the minimal model $Y'$ of $X$ satisfies $H^0(T_{Y'})=0$.

\item In case of $\kappa (X)=2$, we have $H^0(T_X) =0$, because $X$ has neither a positive dimensional family of elliptic curves nor a positive dimensional family of rational curves.
\end{enumerate}

\begin{remark}
Let $X$ be a smooth projective surface. Assume the existence of a finite unramified covering $u: Y\rightarrow X$ with $H^0(S^2T_Y) =0$. Since $u$ is unramified, we get $u^\ast (S^2T_X) =S^2T_Y$ and so $H^0(S^2T_X)=0$. In particular \cite[Lemma 7.1]{R1} shows that $H^0(S^2T_X)=0$, if $X$ is an Enriques surface.
\end{remark}

Let $u: X\rightarrow Y$ be the blow-up at one point $p\in Y$ and set $D:= u^{-1}({p})$. We have $D\cong \PP^1$ and $\Oo _D(D)$ is the line bundle on $D$ of degree $-1$. The natural map $H^0(\Omega ^1_Y) \rightarrow H^0(\Omega ^1_X)$ is an isomorphism. Hence $H^0(S^2\Omega ^1_X)$  contains a three-dimensional linear subspace spanning $S^2\Omega ^1_X$ outside the exceptional locus $D$ of the map $u: X\rightarrow Y$. We have an exact sequence
\begin{equation}\label{equ1}
0 \to T_X \to u^\ast (T_Y) \to \Oo _D(-D)\to 0
\end{equation}
(see \cite[Lemma 15.4 (iv)]{Fu}), in which $\Oo _D(-D)$ is the line bundle on $D \cong \PP^1$ of degree $1$. From (\ref{equ1}) we also get a map $S^2T_X \rightarrow u^\ast (S^2T_Y)$, which is an isomorphism outside $D$. Applying $u_\ast$, we get a map $u_\ast (S^2T_X) \rightarrow S^2T_Y$ which is an isomorphism on $Y\setminus \{p\}$, and in particular it is injective as a map of sheaves.

\begin{remark}\label{aaa2}
Let $u: X\rightarrow Y$ be a birational morphism of smooth surfaces. Since $u$ is a finite sequence of blow-ups of points, we have $u_\ast (\Oo _X)=\Oo _Y$ and $R^iu_\ast (\Oo _X)=0$ for all $i>0$. The natural map $S^2T_X \rightarrow u^\ast (S^2T_Y)$ is injective, because it is an isomorphism outside finitely many divisors of $X$. Applying $u_\ast$ and the projection formula, we get an inclusion $u_\ast (H^0(S^2T_X)) \subseteq H^0(S^2T_Y)$.
\end{remark}

\begin{lemma}\label{aaa1}
Let $u: X \rightarrow Y$ be a blow-up at a point $p$ on a smooth surface $Y$. If $j: H^0(S^2T_X) \rightarrow H^0(S^2T_Y)$ is the map induced by $u_\ast$, then $\mathrm{Im}(j)\subseteq H^0(\Ii _p\otimes S^2T_Y)$.
\end{lemma}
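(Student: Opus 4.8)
The plan is to combine the exact sequence (\ref{equ1}) with a local analysis near the exceptional divisor $D$ to control the image of $j$. First I would use (\ref{equ1}), namely $0 \to T_X \to u^\ast(T_Y) \to \Oo_D(-D) \to 0$, to produce a map on symmetric squares. Taking the second symmetric power is not exact in general, but there is always a natural filtration; the relevant fact is that the cokernel of $S^2 T_X \to u^\ast(S^2 T_Y)$ is supported on $D$, so the induced map is an isomorphism on $X \setminus D \cong Y \setminus \{p\}$ and injective as a sheaf map (this is exactly what is recorded in Remark \ref{aaa2}). After applying $u_\ast$ we get the inclusion $u_\ast(S^2 T_X) \hookrightarrow S^2 T_Y$, and on global sections this is the map $j$.

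The key point is to identify, inside $S^2 T_Y$, the subsheaf $u_\ast(S^2 T_X)$, or at least to show its sections vanish at $p$. Here I would work in local coordinates $(x,y)$ on $Y$ centered at $p$, with the blow-up chart on $X$ given by $x = x$, $y = xt$ (and symmetrically). In the chart, $T_X$ is generated by $\partial_x - (t/x)\partial_t \cdot 0$... more precisely $u^\ast \partial_x, u^\ast \partial_y$ pull back so that $\partial_x = \wt\partial_x - \frac{t}{x}\wt\partial_t$-type relations hold; the upshot is that a local frame of $T_X$ is $\{x\,\partial_x + \text{(corr.)},\ \partial_t\}$ relative to the frame $\{\partial_x,\partial_y\}$ of $u^\ast T_Y$, so that in $S^2$ the generators of the image of $S^2 T_X$ are of the form $x^2\partial_x^2, x\partial_x\partial_y, \partial_y^2$ up to the coordinate change — the essential feature being that one symmetric monomial ($\partial_x^2$, the "direction along the blown-up point") acquires a factor vanishing along $D$. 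Pushing forward, a section of $u_\ast(S^2T_X)$, viewed in $S^2 T_Y$ near $p$, must then lie in the subsheaf generated by $x^2\partial_x^2, x\partial_x\partial_y, \partial_y^2$ (in suitable coordinates), and every such section vanishes at $p$. Hence $\Image(j) \subseteq H^0(\Ii_p \otimes S^2 T_Y)$.

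The main obstacle I expect is making the local computation clean and coordinate-independent enough to be convincing: one must check that the subsheaf of $S^2 T_Y$ cut out by "sections pulling back to regular symmetric quadratic vector fields on $X$" is indeed contained in $\Ii_p \otimes S^2 T_Y$, independent of the choice of chart, and handle the two standard affine charts of the blow-up compatibly. A cleaner alternative, which I would likely prefer in the write-up, is cohomological: tensor (\ref{equ1}) appropriately and use $u_\ast \Oo_X = \Oo_Y$, $R^i u_\ast \Oo_X = 0$ for $i>0$ (Remark \ref{aaa2}), together with $R^i u_\ast \Oo_D(-D) = 0$ (since $\Oo_D(-D) = \Oo_{\PP^1}(1)$ has no higher cohomology on the fiber and $u_\ast \Oo_D(-D)$ is a skyscraper-type sheaf at $p$), to compute $u_\ast(S^2 T_X)$ as the kernel of a surjection $S^2 T_Y \to (\text{skyscraper at }p)$, which then visibly gives $u_\ast(S^2 T_X) \subseteq \Ii_p \otimes S^2 T_Y$ and thus the claim on global sections. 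Either way the content is the twist by $\Oo_D(-D)$ of degree $+1$ on $D$, which forces the "vanishing at $p$."
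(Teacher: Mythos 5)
Your overall strategy (push $S^2T_X$ into $S^2T_Y$ via (\ref{equ1}) and show its sections vanish at $p$) is sound, but both concrete routes you sketch are defective as written. In the local route the image sheaf is not the one you describe: in the chart $y=xt$ the subsheaf $T_X\subset u^\ast (T_Y)$ is generated by $\partial_x+t\partial_y$ and $x\partial_y$, so the image of $S^2T_X$ is generated by $(\partial_x+t\partial_y)^2$, $x(\partial_x+t\partial_y)\partial_y$ and $x^2\partial_y^2$, not by $x^2\partial_x^2$, $x\partial_x\partial_y$, $\partial_y^2$; indeed the subsheaf you wrote down contains $\partial_y^2$, which does not vanish at $p$, contradicting your own conclusion, and there is no distinguished ``direction along the blown-up point'' inside $T_Y$ at $p$. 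The correct local computation does work: writing a section of $S^2T_Y$ as $a\partial_x^2+b\partial_x\partial_y+c\partial_y^2$, its pullback lies in $S^2T_X$ only if $b(x,xt)-2t\,a(x,xt)$ is divisible by $x$ and $c(x,xt)-t\,b(x,xt)+t^2a(x,xt)$ is divisible by $x^2$, and letting $t$ vary along the exceptional curve forces $a(p)=b(p)=c(p)=0$. But the mechanism is this $t$-dependence along $D$, not a vanishing factor attached to a single monomial, and you did not carry the computation out.

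Your preferred cohomological alternative has a genuine gap at its last step: knowing that $u_\ast (S^2T_X)$ is the kernel of a map from $S^2T_Y$ onto a finite-length sheaf supported at $p$ does not ``visibly'' give $u_\ast (S^2T_X)\subseteq \Ii_p\otimes S^2T_Y$. For instance the kernel of the map $S^2T_Y\rightarrow \CC_p$ sending $a\partial_x^2+b\partial_x\partial_y+c\partial_y^2$ to $a(p)$ is a full-rank subsheaf whose sections need not vanish at $p$. One must show that the quotient $S^2T_Y/u_\ast (S^2T_X)$ surjects onto the whole three-dimensional fibre of $S^2T_Y$ at $p$, which is precisely the content of the lemma and is not supplied by the vanishing of $R^iu_\ast\Oo_X$ or $R^iu_\ast\Oo_D(-D)$; moreover, since $\Oo_D(-D)$ is a torsion sheaf, the cokernel of $S^2T_X\rightarrow u^\ast (S^2T_Y)$ carries a two-step filtration (with graded pieces built from ${T_X}_{|D}\otimes \Oo_D(-D)$ and $\Oo_D(-2D)$), so the bookkeeping is more delicate than your sketch suggests. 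The paper's proof avoids all of this: it uses that the image of $u_\ast (T_X)\rightarrow T_Y$ is $\Ii_p\otimes T_Y$ (a vector field lifts to the blow-up only if it vanishes at $p$, cf. Remark \ref{aaa0}), embeds $T_X^{\otimes 2}\subseteq T_X\otimes u^\ast (T_Y)$, and applies $u_\ast$ with the projection formula to get $u_\ast (T_X^{\otimes 2})\subseteq \Ii_p\otimes T_Y^{\otimes 2}$, from which the statement for the symmetric square follows. Either adopt that argument or complete the local computation honestly; as it stands, neither of your two routes proves the lemma.
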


\begin{proof}
It is sufficient to prove that $\mathrm{Im}(j_1)\subseteq H^0(\Ii _p\otimes T_Y^{\otimes2})$, where $j_1: H^0(T_X^{\otimes 2}) \to H^0(T_Y^{\otimes 2})$ is the map induced by $u_\ast$. The corresponding map $u_\ast (T_X) \rightarrow T_Y$ has $\Ii _p\otimes T_Y$ as its image
and $u_\ast (\Oo _D(-D)) =\CC^{\oplus 2}_p$. The sequence (\ref{equ1}) tensored with $u^\ast (T_Y)$ gives an inclusion $T_X\otimes u^\ast (T_Y)\rightarrow u^\ast (T_Y^{\otimes 2})$.
Applying $u_\ast$ and the projection formula, we get an injective map $j_2: H^0(\Ii _p\otimes T_Y^{\otimes 2})\rightarrow H^0(T_Y^{\otimes 2})$. Now $j_1$ factors through $j_2$, implying the assertion.
\end{proof}

\begin{lemma}
Let $Y$ be a smooth projective surface such that $H^0(S^2T_Y)=0$. For a birational morphism $u: X\rightarrow Y$, we get $H^0(S^2T_X)=0$.
\end{lemma}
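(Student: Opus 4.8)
The plan is to reduce the statement to the single blow-up case already settled in Lemma \ref{aaa1} (equivalently, to quote Remark \ref{aaa2} verbatim). First I recall that a birational morphism $u\colon X\to Y$ of smooth projective surfaces factors as a finite composition of blow-ups at points,
$$X = X_m \xrightarrow{\,u_m\,} X_{m-1} \xrightarrow{\,u_{m-1}\,} \cdots \xrightarrow{\,u_1\,} X_0 = Y ,$$
where $u_i$ is the blow-up of $X_{i-1}$ at a point $p_i$. Thus it suffices to treat one blow-up and then iterate down the tower.

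For each $i$, Lemma \ref{aaa1} supplies an injective map $H^0(S^2T_{X_i})\hookrightarrow H^0(\Ii_{p_i}\otimes S^2T_{X_{i-1}})\subseteq H^0(S^2T_{X_{i-1}})$ induced by $(u_i)_\ast$. Composing these injections for $i=m,m-1,\dots,1$ yields an injection $H^0(S^2T_X)\hookrightarrow H^0(S^2T_Y)$. Alternatively, and more directly, Remark \ref{aaa2} already gives this: the natural sheaf map $S^2T_X\to u^\ast(S^2T_Y)$ is injective since it is an isomorphism away from a finite union of divisors of $X$, so applying the left-exact functor $u_\ast$ together with the projection formula and $u_\ast\Oo_X=\Oo_Y$ produces an injection $u_\ast(S^2T_X)\hookrightarrow S^2T_Y$, hence an injection on global sections because $H^0(X,S^2T_X)=H^0(Y,u_\ast S^2T_X)$.

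Finally, since $H^0(S^2T_Y)=0$ by hypothesis, the injection $H^0(S^2T_X)\hookrightarrow H^0(S^2T_Y)$ forces $H^0(S^2T_X)=0$, which is the assertion.

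I do not expect any genuine obstacle here: the entire content is the injectivity of $u_\ast$ applied to the sheaf inclusion $S^2T_X\hookrightarrow u^\ast(S^2T_Y)$ and the identification $u_\ast u^\ast(S^2T_Y)\cong S^2T_Y$, both of which are exactly what Remark \ref{aaa2} (and, in the one-point case, Lemma \ref{aaa1}) record. The only mild care needed is to note that $u$ really does decompose into point blow-ups so that Lemma \ref{aaa1} can be invoked step by step, but this is standard for birational morphisms of smooth surfaces and is already used in Remark \ref{aaa2}.
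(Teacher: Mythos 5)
Your proof is correct, but it runs along a different track than the paper's own argument. The paper also reduces to a single blow-up $u:X\to Y$ at a point $p$, but then argues section by section: given $s\in H^0(S^2T_X)$, it restricts $s$ to $X\setminus E$, views the result as a section of $S^2T_Y$ over $Y\setminus\{p\}$ via the isomorphism off the exceptional locus, extends it to all of $Y$ by Hartogs' theorem (using that $S^2T_Y$ is locally free), and concludes the extension is zero, hence $s=0$. You instead package everything sheaf-theoretically: the injectivity of $S^2T_X\to u^\ast(S^2T_Y)$ (torsion-freeness plus generic isomorphism), left-exactness of $u_\ast$, the projection formula and $u_\ast\Oo_X=\Oo_Y$, giving the injection $H^0(S^2T_X)\hookrightarrow H^0(S^2T_Y)$ --- which is exactly the content of Remark \ref{aaa2}, so in your direct variant no induction on point blow-ups is even needed. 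Your route makes the lemma an immediate corollary of material already in the paper and avoids invoking Hartogs explicitly; the paper's route is more hands-on and self-contained at the level of sections. One small attribution point: Lemma \ref{aaa1} as stated only asserts $\mathrm{Im}(j)\subseteq H^0(\Ii_p\otimes S^2T_Y)$ and does not itself claim injectivity of $j$, so for the injectivity in your step-by-step version you should cite Remark \ref{aaa2} (or repeat the torsion-freeness argument) rather than Lemma \ref{aaa1}; since your direct argument does exactly that, this is a citation nicety, not a gap.
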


\begin{proof}
Since $u$ is the composition of finitely many blow-ups, it is sufficient to prove it when $u$ is the blow-up at a point $p\in Y$. Denote by $E:= u^{-1}({p})$ the exceptional divisor and set $s':=s_{|X\setminus E}$ for a section $s\in H^0(S^2T_X)$. Then $s'$ induces $s_1\in H^0(Y\setminus \{p\},(S^2T_Y)_{|Y\setminus \{p\}})$ and it extends to $\sigma \in H^0(S^2T_Y)$ by Hartogs' theorem, because $S^2T_Y$ is locally free. By assumption we have $\sigma =0$. It implies that $s'=0$, which also implies $s=0$.
\end{proof}

\begin{lemma}\label{u3}
Let $X$ be a smooth and compact complex surface whose minimal model $Y\not\cong X$ is either a complex torus or $C\times W$ with $C$ an elliptic curve and $W$ a smooth curve of genus at least two. Then we have $H^0(S^2T_X)=0$.
\end{lemma}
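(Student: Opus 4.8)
The plan is to reduce, by way of the two results just proved, to a cohomology computation on $Y$ itself. Since $X\not\cong Y$, the birational morphism $u\colon X\to Y$ is a nontrivial composition of blow-ups of points; in particular it factors as $X\xrightarrow{\,v\,}X'\xrightarrow{\,w\,}Y$, where $w\colon X'\to Y$ is the blow-up at a single point $p\in Y$ and $v\colon X\to X'$ is a birational morphism. By Remark~\ref{aaa2} the map $H^0(S^2T_X)\to H^0(S^2T_Y)$ induced by $u_\ast$ is injective; since $u_\ast=w_\ast v_\ast$, it factors through $H^0(S^2T_{X'})$, so its image lies in the image of the map $H^0(S^2T_{X'})\to H^0(S^2T_Y)$ induced by $w_\ast$, which by Lemma~\ref{aaa1} is contained in $H^0(\Ii_p\otimes S^2T_Y)$. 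Hence $H^0(S^2T_X)$ embeds into $H^0(\Ii_p\otimes S^2T_Y)$, and it is enough to prove that the latter vanishes for every $p\in Y$. Observe that the plain vanishing of $H^0(S^2T_Y)$ is \emph{not} available here --- for a torus it is three-dimensional --- which is exactly why Lemma~\ref{aaa1}, and not merely Hartogs' theorem, drives the argument.

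Next I would make $S^2T_Y$ explicit in the two cases. If $Y$ is a complex torus, then $T_Y\cong\Oo_Y^{\oplus 2}$, so $S^2T_Y\cong\Oo_Y^{\oplus 3}$ and $H^0(\Ii_p\otimes S^2T_Y)\cong H^0(\Ii_p)^{\oplus 3}=0$, because a holomorphic function on the connected compact surface $Y$ vanishing at $p$ is identically zero. If $Y=C\times W$ with $C$ an elliptic curve and $g(W)\ge 2$, then $T_C\cong\Oo_C$, and since $\mathrm{pr}_W^\ast T_W$ is a direct summand of $T_Y$ we get $T_Y\cong\Oo_Y\oplus M$ with $M:=\mathrm{pr}_W^\ast\omega_W^{-1}$, hence $S^2T_Y\cong\Oo_Y\oplus M\oplus M^{\otimes 2}$. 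By the K\"unneth formula $H^0(Y,M)=H^0(W,\omega_W^{-1})$ and $H^0(Y,M^{\otimes 2})=H^0(W,\omega_W^{-2})$, both of which vanish since $\deg\omega_W=2g(W)-2\ge 2>0$; therefore $H^0(\Ii_p\otimes S^2T_Y)=H^0(\Ii_p)\oplus H^0(\Ii_p\otimes M)\oplus H^0(\Ii_p\otimes M^{\otimes 2})=0$, the first summand vanishing as above and the other two embedding into $H^0(M)$ and $H^0(M^{\otimes 2})$.

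Putting the two steps together yields $H^0(S^2T_X)=0$ in both cases. The only point calling for care is the reduction in the first paragraph: one must verify that the natural map $S^2T_X\to u^\ast S^2T_Y$ factors as $S^2T_X\to v^\ast S^2T_{X'}\to v^\ast w^\ast S^2T_Y$ --- this is the chain rule for differentials after applying $S^2$ --- so that the containments from Remark~\ref{aaa2} and Lemma~\ref{aaa1} genuinely compose, and that the resulting map on global sections is injective, which holds because a section of a torsion-free sheaf on an irreducible surface is determined by its restriction to any dense open subset. I do not expect a substantive obstacle beyond this bookkeeping; the cohomological input is elementary.
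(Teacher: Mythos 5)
Your proposal is correct and follows essentially the same route as the paper: factor $u$ through a single blow-up at $p\in Y$, combine Remark~\ref{aaa2} with Lemma~\ref{aaa1} to embed $H^0(S^2T_X)$ into $H^0(\Ii_p\otimes S^2T_Y)$, and conclude by the structure of $T_Y$. The only difference is that you spell out the vanishing $H^0(\Ii_p\otimes S^2T_Y)=0$ (via $S^2T_Y\cong\Oo_Y^{\oplus 3}$, resp.\ $\Oo_Y\oplus M\oplus M^{\otimes 2}$ with K\"unneth), which the paper leaves implicit in its citation of Remark~\ref{aaa0} and Lemma~\ref{aaa1}.
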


\begin{proof}
Since $X\not \cong Y$, there is a blow-up $B\rightarrow Y$ at a point $p\in Y$ and a birational morphism $X\rightarrow B$; Remark \ref{aaa0} and Lemma \ref{aaa1} give $H^0(T_B)=H^0(S^2T_B)=0$ and then Remark \ref{aaa2} gives $H^0(T_X)=H^0(S^2T_X)=0$. 
\end{proof}

\begin{remark}\label{bb2}
Let $f: X\rightarrow W$ be an unramified covering between smooth surfaces. If $H^0(T_W)\ne 0$ (resp. $H^0(S^2T_W) \ne 0$), then we have $H^0(T_X)\ne 0$ (resp. $H^0(S^2T_W) \ne 0$). Now assume that $W$ is not minimal and take any $D\subset W$ with $D\cong \PP^1$ and $D^2=-1$. Let $v: W\rightarrow W'$ be the blow-down of $D$. Since $D$ has an open simply connected neighborhood in the Euclidean topology, there is an unramified covering $f': X'\rightarrow W'$ and a map $v': X\rightarrow  X'$ such that $v'$ is the blow-down of $\deg (f)$ disjoint exceptional curves and $f'\circ v' = v\circ f$.
\end{remark}

\begin{proposition}\label{bb1}
Let $Y$ be a smooth surface and $S\subset Y$ be any finite subset such that $H^0(\Ii _S\otimes T_Y)=H^0(\Ii _S\otimes S^2T_Y)=0$. If $X$ is any smooth surface with a birational morphism $u: X\rightarrow Y$ with $S$ contained in the image of divisors of $X$ contracted by $u$, then we have $h^0(T_X)=h^0(S^2T_X)=0$. In particular, $\Phi$ is trivial for any co-Higgs bundle of rank two on $X$. 
\end{proposition}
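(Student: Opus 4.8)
The final sentence of the proposition is immediate from Lemma~\ref{c4} once the vanishing $h^0(T_X)=h^0(S^2T_X)=0$ has been established, so the plan is to prove that vanishing. I would deduce it by iterating the single--blow--up statements already available: Remark~\ref{aaa0}, which identifies $H^0(T_{X'})$ with $H^0(\Ii_p\otimes T_{X''})$ for a blow-up $X'\to X''$ at $p$ (so the natural map $H^0(T_{X'})\hookrightarrow H^0(T_{X''})$ is injective with image $H^0(\Ii_p\otimes T_{X''})$); and Lemma~\ref{aaa1} together with Remark~\ref{aaa2}, which say that under a point blow-up $X'\to X''$ the natural map $H^0(S^2T_{X'})\hookrightarrow H^0(S^2T_{X''})$ is injective with image inside $H^0(\Ii_p\otimes S^2T_{X''})$.

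First I would factor $u$ as a tower of point blow-ups $X=X_N\xrightarrow{\pi_N}X_{N-1}\xrightarrow{\pi_{N-1}}\cdots\xrightarrow{\pi_1}X_0=Y$, where $\pi_i$ is the blow-up at a point $p_i\in X_{i-1}$, $E_i\subset X_i$ is its exceptional curve, and $\rho_i:=\pi_1\circ\cdots\circ\pi_i\colon X_i\to Y$. The locus over which $u$ fails to be an isomorphism is the finite set $S':=\{\rho_{i-1}(p_i):1\le i\le N\}$, and the hypothesis that $S$ lies in the image of the $u$-contracted divisors is exactly the statement $S\subseteq S'$. Since $\Ii_{S'}\subseteq\Ii_S$ and, by assumption, $H^0(\Ii_S\otimes T_Y)=H^0(\Ii_S\otimes S^2T_Y)=0$, it suffices to show that the natural injections $H^0(T_X)\hookrightarrow H^0(T_Y)$ and $H^0(S^2T_X)\hookrightarrow H^0(S^2T_Y)$ obtained by iterating the statements above have images contained in $H^0(\Ii_{S'}\otimes T_Y)$ and $H^0(\Ii_{S'}\otimes S^2T_Y)$, respectively.

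The heart is a bookkeeping of vanishing loci, carried out by downward induction along the tower; the two cases ($T_X$ and $S^2T_X$) are formally identical, so I would treat $S^2T_X$. Write $j_i\colon H^0(S^2T_{X_i})\hookrightarrow H^0(S^2T_{X_{i-1}})$ for the natural map and $j:=j_1\circ\cdots\circ j_N$. Lemma~\ref{aaa1} gives $\mathrm{Im}(j_i)\subseteq H^0(\Ii_{p_i}\otimes S^2T_{X_{i-1}})$, and the extra input is that over $X_{i-1}\setminus\{p_i\}$ the map $\pi_i$ is an isomorphism and $j_i$ is there simply transport of sections; hence a global section vanishing at a point $x\notin E_i$ is carried by $j_i$ to a section vanishing at $\pi_i(x)$. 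Now fix an index $\ell$ and a section $\xi\in H^0(S^2T_X)$, and follow $p_\ell$ down the tower: set $q_{\ell-1}:=p_\ell$ and $q_{i-1}:=\pi_i(q_i)$, so $q_0=\rho_{\ell-1}(p_\ell)\in S'$. The image of $\xi$ in $H^0(S^2T_{X_{\ell-1}})$ lies in $\mathrm{Im}(j_\ell)$, hence vanishes at $q_{\ell-1}=p_\ell$; and at each further descent $X_i\to X_{i-1}$, if the current point $q_i$ lies off $E_i$ the vanishing is carried down to $q_{i-1}=\pi_i(q_i)$ by the transport observation, while if $q_i\in E_i$ then $q_{i-1}=p_i$ and the image in $H^0(S^2T_{X_{i-1}})$ lies in $\mathrm{Im}(j_i)$, hence vanishes at $p_i=q_{i-1}$ automatically. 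Iterating, $j(\xi)$ vanishes at $q_0=\rho_{\ell-1}(p_\ell)$; as $\ell$ and $\xi$ were arbitrary, $\mathrm{Im}(j)\subseteq H^0(\Ii_{S'}\otimes S^2T_Y)=0$, so $H^0(S^2T_X)=0$ by injectivity of $j$. The identical run with Remark~\ref{aaa0} in place of Lemma~\ref{aaa1} gives $H^0(T_X)=0$, and the ``in particular'' clause then follows from Lemma~\ref{c4}.

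The only genuinely delicate point, and the one I expect to be the main obstacle, is this descent of vanishing loci through the tower: one must keep careful track of whether each blow-up centre $p_\ell$, viewed at an intermediate stage $X_i$, lies on a previously created exceptional curve $E_i$ or not, since the two alternatives are handled by different mechanisms — transport of sections in one case, and the built-in vanishing at the blow-up centre coming from Remark~\ref{aaa0}/Lemma~\ref{aaa1} in the other. Everything else is a formal concatenation of the single-blow-up results already proved.
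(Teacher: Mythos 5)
Your argument is correct and follows essentially the same route as the paper: the paper's proof simply invokes Remarks~\ref{aaa0}, \ref{aaa2} and Lemma~\ref{aaa1} to get $H^0(T_X)=H^0(S^2T_X)=0$ and then applies Lemma~\ref{c4}, and your tower-of-blow-ups descent with the case split (centre on or off an earlier exceptional curve) is exactly the iteration the paper leaves implicit. No discrepancy beyond your making the bookkeeping explicit.
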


\begin{proof}
By Remarks \ref{aaa0}, \ref{aaa2} and Lemma \ref{aaa1}, we have $H^0(T_X)=H^0(S^2T_X)=0$. Now we may apply Lemma \ref{c4} for the second assertion. 
\end{proof}

\begin{remark}
Let $Y$ be a minimal surface with $\kappa (Y)=0$. We have $H^0(T_Y)=H^0(S^2T_Y)=0$, unless either $Y$ is an Abelian surface or a bielliptic surface. Thus in Proposition \ref{bb1} with $\kappa (Y)=0$, we may take either $S=\emptyset$ or, in the two exceptional cases, as $S$ any point of $Y$ by Remark \ref{bb2}. 
\end{remark}

\begin{remark}
The surface $Y =C\times W$ described in Lemma \ref{u3} and Proposition \ref{v3}  is the only one, up to a finite unramified covering, with $H^0(T_Y)\ne 0$ and $\kappa (Y) =1$. This can be obtained by Lemma \ref{aaa1} and Remark \ref{bb2} for these surfaces and their unramified coverings.
\end{remark}

\begin{proposition}\label{v2}
Let $(\Ee, \Phi)$ be a co-Higgs bundle of rank two on an Abelian surface $X$. 
\begin{itemize}
\item [(i)] If $\Ee$ is simple, then either $\Phi =0$ or $\Phi$ is injective. In particular, $(\Ee ,\Phi )$ is stable, semistable, strictly semistable or unstable if and only if $\Ee$ has the same property.
\item [(ii)] If $\Ee$ is not semistable, then $(\Ee ,\Phi)$ is not semistable.
\item [(iii)] If $\Ee$ is strictly semistable and indecomposable, but not simple, then $(\Ee ,\Phi)$ is strictly semistable.
\end{itemize}
\end{proposition}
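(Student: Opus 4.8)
The plan is to use that on an Abelian surface $X$ the tangent bundle is trivial: $T_X\cong\Oo_X^{\oplus 2}$, globally generated by two translation-invariant, hence nowhere-vanishing, vector fields $\partial_1,\partial_2$. A co-Higgs field on a bundle $\Ee$ is then a pair $\Phi=\psi_1\otimes\partial_1+\psi_2\otimes\partial_2$ with $\psi_i\in\End(\Ee)=H^0(\mathcal{E}nd(\Ee))$, the integrability $\Phi\wedge\Phi=0$ amounting to $\psi_1\psi_2=\psi_2\psi_1$; moreover a subsheaf $\Ff\subset\Ee$ satisfies $\Phi(\Ff)\subset\Ff\otimes T_X$ exactly when $\psi_1(\Ff)\subset\Ff$ and $\psi_2(\Ff)\subset\Ff$. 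For (i), if $\Ee$ is simple then $\End(\Ee)=\CC\cdot\id$, so $\psi_i=c_i\id$ and $\Phi=\id\otimes\sigma$ with $\sigma=c_1\partial_1+c_2\partial_2\in H^0(T_X)$; if $\sigma=0$ then $\Phi=0$, while if $\sigma\ne 0$ then $\sigma$ is nowhere zero, so $\Phi$ is fiberwise injective, hence injective. Since $\Phi(\Ff)=\Ff\otimes\sigma\subset\Ff\otimes T_X$ for \emph{every} subsheaf $\Ff\subset\Ee$, the $\Phi$-invariant subsheaves are precisely all subsheaves of $\Ee$, and therefore $(\Ee,\Phi)$ is stable, semistable, strictly semistable or unstable if and only if $\Ee$ is.

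Part (ii) is then immediate from Theorem \ref{t2.10}, since $\kappa(X)=0\ge 0$ and $\Ee$ is torsion-free and not semistable. (Alternatively one argues directly: if $\Ll\subset\Ee$ is the maximal destabilizing line subbundle, sitting in $0\to\Ll\to\Ee\to\Ii_Z\otimes\Aa\to 0$ with $\deg\Ll>\deg\Ee/2$, then $\Hom(\Ll,\Ii_Z\otimes\Aa)\into H^0(\Aa\otimes\Ll^\vee)=0$ because $\deg(\Aa\otimes\Ll^\vee)<0$, so $\psi_i(\Ll)\subset\Ll$ for $i=1,2$ and $\Ll$ destabilizes $(\Ee,\Phi)$.)

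For (iii), note first that since $\Ee$ is semistable every $\Phi$-invariant subsheaf of $\Ee$ has slope at most $\mu(\Ee)$, so $(\Ee,\Phi)$ is automatically semistable; it suffices to exhibit a $\Phi$-invariant subsheaf of slope exactly $\mu(\Ee)$. I would first pin down the structure of $\Ee$. A strictly semistable rank-two bundle on a surface is an extension $0\to\Ll_1\to\Ee\to\Ll_2\to 0$ of line bundles with $\deg\Ll_1=\deg\Ll_2$ (taking $\Ll_1$ a saturated maximal-slope sub-line-bundle; any torsion in the quotient would violate saturatedness). This extension is non-split because $\Ee$ is indecomposable, and one must have $\Ll_1\cong\Ll_2$: otherwise $\Hom(\Ll_1,\Ll_2)=\Hom(\Ll_2,\Ll_1)=0$, because a numerically trivial effective line bundle on an Abelian surface is trivial, and chasing the long exact sequences obtained by applying $\Hom(\Ee,-)$ and $\Hom(-,\Ll_i)$ to the extension then forces $\End(\Ee)=\CC$, contradicting that $\Ee$ is not simple. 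Writing $\Ll:=\Ll_1\cong\Ll_2$, it follows that $\End(\Ee)\cong\CC[\epsilon]/(\epsilon^2)$, with maximal ideal spanned by $\psi:=\iota\circ p$, where $p:\Ee\to\Ll$ and $\iota:\Ll\into\Ee$ are the quotient and sub maps of the extension; in particular $\Image\psi=\Ll$ and $\deg\Ll=\mu(\Ee)$. Since each $\psi_i=c_i\id+\lambda_i\psi$, we get $\psi_i(\Ll)\subset\Ll+\Image\psi=\Ll$, so the sub-line-bundle $\Ll$ is $\Phi$-invariant and witnesses that $(\Ee,\Phi)$ is not stable; together with semistability, $(\Ee,\Phi)$ is strictly semistable.

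The main obstacle is the structure result underlying (iii): that an indecomposable, non-simple, strictly semistable rank-two bundle on an Abelian surface is a non-split self-extension of a single line bundle with $\End(\Ee)\cong\CC[\epsilon]/(\epsilon^2)$. The two $\Hom$-vanishings use that $X$ is Abelian (an effective divisor of zero $\Hh$-degree is zero), and identifying $\End(\Ee)$ requires careful bookkeeping of the connecting maps in the relevant $\Ext$ sequences, using that no nonzero scalar-on-the-sub, zero-on-the-quotient endomorphism exists (this would split the sequence) and that $\psi^2=0$ since $p\circ\iota=0$. Once the structure is established, the $\Phi$-invariance of the canonical sub-line-bundle is automatic, exactly as in the nilpotent case treated earlier in the paper; the analogous point in (ii), that the destabilizing sub-line-bundle of $\Ee$ is genuinely $\Phi$-invariant, is handled uniformly by the vanishing $H^0(\Aa\otimes\Ll^\vee)=0$ or simply by quoting Theorem \ref{t2.10}.
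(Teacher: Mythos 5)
Your parts (i) and (ii) are correct and essentially identical to the paper's argument: triviality of $T_X$ turns $\Phi$ into a pair of endomorphisms, simplicity forces them to be scalars, and (ii) is quoted from Theorem \ref{t2.10} (your alternative direct argument for (ii) is also fine). The problem is in (iii), where your proof has a genuine gap.

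You assert that a strictly semistable rank-two bundle on a surface sits in an extension $0\to\Ll_1\to\Ee\to\Ll_2\to 0$ of \emph{line bundles}, justifying this only by saturating $\Ll_1$. Saturation gives a torsion-free rank-one quotient, but on a surface such a quotient is in general $\Ii_Z\otimes\Ll_2$ with $Z$ a possibly nonempty zero-dimensional subscheme, and this case really occurs among the bundles covered by (iii): by the Serre construction, for $Z\subset X$ of length at least $2$ there are locally free extensions $0\to\Oo_X\to\Ee\to\Ii_Z\to 0$ on an Abelian surface, and a short computation gives $\End(\Ee)\cong\CC\,\id\oplus\CC\,\psi$ with $\psi$ the composite $\Ee\to\Ii_Z\hookrightarrow\Oo_X\hookrightarrow\Ee$; such $\Ee$ is strictly semistable, indecomposable and not simple, yet its unique destabilizing quotient is not locally free, so your structure claim (non-split self-extension of a line bundle, $\Image\psi=\Ll$) fails for it, and the $\Hom$-chases you build on it do not apply as written. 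The gap is reparable, and the repair is exactly the dichotomy the paper runs: if $Z\ne\emptyset$, then $\Hom(\Ll,\Ii_Z\otimes\Aa)\subseteq H^0(\Ii_Z\otimes\Aa\otimes\Ll^\vee)=0$, since a nonzero section of the degree-zero line bundle $\Aa\otimes\Ll^\vee$ defines an effective divisor of degree zero, hence is nowhere vanishing and cannot vanish on $Z$; therefore \emph{every} endomorphism of $\Ee$, in particular both components $\psi_1,\psi_2$ of $\Phi$, preserves $\Ll$, and no computation of $\End(\Ee)$ is needed. If instead $Z=\emptyset$, your analysis (forcing $\Ll_1\cong\Ll_2$ for indecomposable non-simple $\Ee$, $\End(\Ee)\cong\CC[\epsilon]/(\epsilon^2)$, and invariance of $\Ll$) is correct and agrees with the paper's case "$\Hom(\Ii_Z\otimes\Aa,\Ll)\ne 0$ with $\Aa\cong\Ll$". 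So you should replace the blanket assumption that the quotient is a line bundle by this case distinction; with that change the proof of (iii) is complete.
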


\begin{proof}
Note that we have $T_X\cong \Oo _X^{\oplus 2}$ and so $\Phi : \Ee \rightarrow \Ee ^{\oplus 2}$. If $\Ee$ is simple, then there are $c_1,c_2\in \CC$ such that $\Phi = (c_1\id_{\Ee},c_2\id_{\Ee})$. So
for any sheaf $\Bb \subset \Ee$, we have $\Phi (\Bb)\subset \Bb \otimes T_X$, implying (i). The assertion (ii) is a special case of Theorem \ref{t2.10}. 


Now assume that $\Ee$ is strictly semistable, but not simple. Since $\Ee$ is strictly semistable, then it fits in an exact sequence (\ref{eqaa1}) with $\deg \Aa=\deg \Ll$. Since $\Ee$ is not simple, either (\ref{eqaa1}) is not the unique destabilizing sequence of $\Ee$ or $\Hom (\Ii _Z\otimes \Aa ,\Ll )$ is not trivial. In the former case we have $Z=\emptyset$ and $\Ee \cong \Ll\oplus \Aa$; in this case we get
$$ \dim \End (\Ee)=\left\{
                                           \begin{array}{ll}
                                             2, & \hbox{if $\Aa \not \cong \Ll$;}\\                                      
                                             4, & \hbox{if $\Aa \cong \Ll$}
                                            \end{array}
                                         \right.$$
In the latter case the condition $\deg \Aa =\deg \Ll$ implies $\Aa \cong \Ll$. So we get either either $Z=\emptyset$ and (\ref{eqaa1}) splits or $\dim \End (\Ee ) =3$. If $\Ee$ is indecomposable, then we have $\Aa \cong \Ll$ and any endomorphism of $\Ee$ sends $\Ll$ into itself. Thus we get $\Phi (\Ll )\subset \Ll\otimes T_X$.
\end{proof}

\begin{proposition}\label{v3}
Let $X =C\times W$, where $C$ and $W$ are smooth curves of genus one and $g\ge 2$, respectively.  
\begin{itemize}
\item [(i)] If $\Ee$ is simple, then either $\Phi =0$ or $\Phi$ is injective. In particular, $(\Ee ,\Phi )$ is stable, semistable, strictly semistable or unstable if and only if $\Ee$ has the same property.
\item [(ii)] If $\Ee$ is not semistable, then $(\Ee ,\Phi)$ is not semistable.
\item [(iii)] If $\Ee$ is strictly semistable and indecomposable, but not simple, then $(\Ee ,\Phi)$ is strictly semistable.
\end{itemize}
\end{proposition}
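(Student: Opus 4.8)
The plan is to run the proof of Proposition~\ref{v2} with the splitting $T_X\cong\Oo_X^{\oplus 2}$ of the abelian case replaced by $T_X\cong\Oo_X\oplus\Ll_0$, where $q\colon X\to W$ is the projection and $\Ll_0:=q^\ast T_W$ (we use $T_C\cong\Oo_C$, as $g(C)=1$). Since $g:=g(W)\ge 2$, the following three features of $\Ll_0$ will be used throughout: (a) $H^0(X,\Ll_0)=H^0(W,T_W)=0$; (b) $H^0(X,\Ll_0^\vee)=H^0(W,\Omega_W^1)\neq 0$, so there is a nonzero sheaf injection $\omega\colon\Ll_0\hookrightarrow\Oo_X$; (c) $\Ll_0\equiv(2-2g)F$ with $F$ a fibre of $q$, hence $\deg_H\Ll_0<0$ for every ample $H$. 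Accordingly a co-Higgs field decomposes as $\Phi=(\Phi_0,\Phi_1)$ with $\Phi_0\in H^0(\mathcal{E}nd(\Ee))$ and $\Phi_1\in H^0(\mathcal{E}nd(\Ee)\otimes\Ll_0)$, the integrability $\Phi\wedge\Phi=0$ amounting to $[\Phi_0,\Phi_1]=0$, which is not needed below. For a saturated line subbundle $\Ff\subsetneq\Ee$, I write $\Ee/\Ff\cong\Ii_Z\otimes\Bb$ with $Z$ finite, $\Bb$ a line bundle and $\det\Ee\cong\Ff\otimes\Bb$.

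For~(i), simplicity of $\Ee$ gives $\Phi_0=c\,\id_\Ee$ for some $c\in\CC$. If $c\neq 0$ the $\Oo_X$-component of $\Phi$ is injective, hence so is $\Phi$; so assume $c=0$, i.e.\ $\Phi=\Phi_1\colon\Ee\to\Ee\otimes\Ll_0$, and suppose for contradiction that $\Phi_1\neq 0$ is not injective. Its kernel $\Kk$ is a saturated line subbundle (the saturation of $\ker\Phi_1$ is still killed by $\Phi_1$ because $\Ee\otimes\Ll_0$ is torsion-free), so $\Phi_1$ induces an injection $\Ee/\Kk\cong\Ii_Z\otimes\Bb\hookrightarrow\Ee\otimes\Ll_0$. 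Composing it with $\Ee\otimes\Ll_0\twoheadrightarrow(\Ee/\Kk)\otimes\Ll_0$ gives a map $\Ii_Z\otimes\Bb\to\Ii_Z\otimes\Bb\otimes\Ll_0$, and $\Hom(\Ii_Z\otimes\Bb,\Ii_Z\otimes\Bb\otimes\Ll_0)$ injects into $H^0(X,\Ll_0)=0$ by~(a); hence $\mathrm{Im}\,\Phi_1\subseteq\Kk\otimes\Ll_0$. The key step is then to form the endomorphism
$$\Ee\twoheadrightarrow\Ii_Z\otimes\Bb\hookrightarrow\Kk\otimes\Ll_0\xrightarrow{\ \id_\Kk\otimes\omega\ }\Kk\hookrightarrow\Ee,$$
which is nonzero — the first arrow is surjective, the other three injective — but annihilates $\Kk$, hence is not a scalar, contradicting simplicity. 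So $\Phi_1$ is $0$ or injective, which is the first claim of~(i). For the ``in particular'' part the implications from $\Ee$ to $(\Ee,\Phi)$ are automatic, and when $\Phi$ is injective it remains to see that a maximal destabilizing line subbundle $\Ff\subsetneq\Ee$, if it exists, is $\Phi$-invariant: $\Phi_0(\Ff)=c\Ff\subseteq\Ff$, and the image of $\Phi_1|_\Ff$ in $(\Ee/\Ff)\otimes\Ll_0$ is a section of $\Ii_Z\otimes\Bb\otimes\Ll_0\otimes\Ff^\vee$, whose $H$-degree is at most $\deg_H\Ll_0<0$ (since $\deg_H\Bb=\deg_H\Ee-\deg_H\Ff\le\deg_H\Ff$), hence vanishes; so $\Phi_1(\Ff)\subseteq\Ff\otimes\Ll_0$ and $\Ff$ destabilizes $(\Ee,\Phi)$ exactly as it does $\Ee$.

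Part~(ii) is an instance of Theorem~\ref{t2.10}, since $K_X\cong q^\ast K_W$ gives $\kappa(X)=1\ge0$. Part~(iii) I would carry out as the abelian analogue, with~(c) replacing the trivial equality $\Ll_0=\Oo_X$ available there. If $\Ee$ is strictly semistable and indecomposable it has a \emph{unique} saturated destabilizing line subbundle $\Ll$, with $\mu(\Ll)=\mu(\Ee)$ (two distinct ones would be disjoint — else they share a saturation — and would exhibit $\Ee$ as decomposable), so every endomorphism of $\Ee$ preserves $\Ll$ and in particular $\Phi_0(\Ll)\subseteq\Ll$. With $\Ee/\Ll\cong\Ii_Z\otimes\Bb$ and $\deg_H\Bb=\deg_H\Ll$, the image of $\Phi_1|_\Ll$ in $(\Ee/\Ll)\otimes\Ll_0$ is a section of $\Ii_Z\otimes\Bb\otimes\Ll_0\otimes\Ll^\vee$, whose $H$-degree is $\deg_H\Ll_0<0$, hence zero; thus $\Phi_1(\Ll)\subseteq\Ll\otimes\Ll_0$. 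Therefore $\Ll$ is a $\Phi$-invariant subsheaf of slope $\mu(\Ee)$; since $\Ee$ is semistable so is $(\Ee,\Phi)$, while $\Ll$ prevents stability, so $(\Ee,\Phi)$ is strictly semistable.

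The genuinely new ingredient, and the step I expect to be the main obstacle, is the injectivity dichotomy in~(i). On an abelian surface simplicity immediately pins $\Phi$ down to $(c_1\id,c_2\id)$, but here the ``$W$-direction'' part $\Phi_1$ lies in $H^0(\mathcal{E}nd(\Ee)\otimes\Ll_0)$ and need not be a scalar, so one really has to rule out a nonzero non-injective $\Phi_1$; this is exactly where both (a) (to force the nilpotency $\mathrm{Im}\,\Phi_1\subseteq\Kk\otimes\Ll_0$) and (b) (to manufacture the offending endomorphism through $\omega$) are indispensable. Everything else reduces to the negativity $\deg_H\Ll_0<0$ from~(c) together with the template of Proposition~\ref{v2}.
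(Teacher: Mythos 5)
Your argument is correct, but your treatment of part (i) takes a genuinely different route from the paper's. The paper's proof is shorter: writing $T_X\cong \Oo_X\oplus\Rr$ with $\Rr=\pi_2^\ast T_W$, it notes that $\Rr^\vee\cong\pi_2^\ast\omega_W$ is globally generated and nontrivial, whence $\Hom(\Ee,\Ee\otimes\Rr)=0$ whenever $\Ee$ is simple (compose with the evaluation maps $\Rr\to\Oo_X$: each composite is a scalar endomorphism whose image lies in $\Ee(-D)$ for an effective divisor $D$, hence is zero) or semistable (by slopes); so for simple $\Ee$ the whole field is $\Phi=(c\,\id_\Ee,0)$, which preserves every subsheaf, and (i) follows at once, while in (ii)--(iii) $\Phi$ is literally an endomorphism. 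You never prove this vanishing: you keep the component $\Phi_1\in H^0(\mathcal{E}nd(\Ee)\otimes\Ll_0)$ alive, prove only the dichotomy ``zero or injective'' by forcing $\mathrm{Im}(\Phi_1)\subseteq\Kk\otimes\Ll_0$ (via $H^0(\Ll_0)=0$) and then manufacturing a non-scalar endomorphism through a section of $\Ll_0^\vee$, and you replace ``$\Phi_1=0$'' in the invariance statements by the degree estimate $\deg_H\Ll_0<0$, which kills the induced maps into $(\Ee/\Ff)\otimes\Ll_0$ whenever $\deg_H\Ff\ge\deg_H\Ee/2$. Both routes are sound; the paper's buys the stronger structural conclusion $\Phi_1=0$ (so your ``$c=0$ and $\Phi_1$ injective'' case in fact never occurs) with much less work, while yours is more robust in that it only uses $H^0(\Ll_0)=0$, $H^0(\Ll_0^\vee)\ne 0$ and $\deg_H\Ll_0<0$ and would apply verbatim with any line bundle having these properties in place of $q^\ast T_W$; moreover your part (iii) (uniqueness of the slope-equal saturated line subbundle of an indecomposable strictly semistable bundle, hence invariance under all endomorphisms) streamlines the $\End(\Ee)$-dimension case analysis that the paper imports from Proposition \ref{v2}. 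Part (ii) is identical in both: an appeal to Theorem \ref{t2.10} via $\kappa(X)=1$.
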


\begin{proof}
Let $\pi _2: X\rightarrow W$ denote the projection and then we have $T_X\cong \Oo _X \oplus \Rr$ with $\Rr^\vee \cong  \pi _2^\ast (\omega _W)$. Here $\Rr^\vee$ is spanned, but not trivial. If $\Ee$ is either simple or semistable, then $\Phi$ is associated to an endomorphism of $\Ee$, because there is no non-zero map $\Ee \rightarrow \Ee \otimes \Rr $. If $\Ee$ is simple, then we get $\Phi (\Bb )\subset \Bb \otimes \Oo _X\oplus \{0\}\subset \Bb \otimes T_X$ and so the part (i). Parts (ii) and (iii) are proved as in Proposition \ref{v2}.
\end{proof}



\begin{remark}\label{c1}
Let $\FF_e$ for $e\ge 0$, be a Hirzebruch surface with $\mathrm{Pic}(\FF_e)\cong \ZZ^{\oplus 2} \cong \ZZ\langle h,f\rangle$, where $f$ is a ruling $\pi : \FF_e\rightarrow \PP^1$ and $h$ is a section of this ruling with $h^2=-e$. We have 
\begin{align*}
T_{\FF_e} &\cong \Oo _{\FF_e}(2f)\oplus \Oo _{\FF_e}(2h+ef),\\
S^2T_{\FF_e} &\cong \Oo _{\FF_e}(4f) \oplus \Oo _{\FF_e}(2h+(e+2)f)\oplus \Oo _{\FF_e}(4h+2ef)).
\end{align*}
Now fix a finite subset $S\subset \FF_e$. 

(a) Assume $e=0$ and we may consider $\FF_0$ as a smooth quadric surface in $\PP^3$. Letting $\eta : \FF_0 \rightarrow \PP^1$ be the other projection, we have $h^0(\Ii _S\otimes T_{\FF_0}) =0$ if and only if $\sharp (\pi (S)) \ge 3$ and $\sharp (\eta (S)) \ge 3$. We have $h^0(\Ii _S\otimes S^2T_{\FF_0}) =0$ if and only if $\sharp (\pi (S)) \ge 5$, $\sharp (\eta (S)) \ge 5$ and $S$ is not contained in other quadric surfaces in $\PP^3$. If $h^0(\Ii _S\otimes S^2T_{\FF_0}) =0$, then we have $\sharp (S) \ge 9$. Conversely, if $\sharp (S)\ge 9$ and $S$ is general in $\FF_0$, then we get $h^0(\Ii _S\otimes S^2T_{\FF_0}) =0$

(b) Assume $e=1$ and let $u: \FF_1\rightarrow \PP^2$ denote the blow-down of $h$. The linear system $|\Oo _{\FF_1}(2h+f)|$ has $h$ as its base locus and $u$ is induced by $|h+f|$. Then the linear system $|\Oo _{\FF_1}(4h+2f)|$ has $2h$ as its base locus and $H^0(\Oo _{\FF_1}(2h+2f)) = u^\ast (H^0(\Oo _{\PP^2}(2))$. We have $h^0(\Ii _S\otimes T_{\FF_1}) =0$ if and only if $\sharp (\pi (S)) \ge 3$. We have $H^0(\Ii _S\otimes S^2T_{\FF_1}) =0$ if and only if $h^0(\PP^2,\Ii _{u(S)}(2)) =0$.

(c) Assume $e\ge 2$ and that $S$ is general. We have 
\begin{align*}
&h^0(\Oo _{\FF_e}(2h+ef)) = h^0(\Oo _{\FF_e}(h+ef)) = e+2, \\
&h^0(\Oo _{\FF_e}(4h+2ef)) = h^0(\Oo _{\FF_e}(2h+2ef)) = 3e+3,
\end{align*}
and it implies that $H^0(\Ii _S\otimes T_{\FF_e}) =0$ if $\sharp (S) \ge e+2$ and $H^0(\Ii _S\otimes S^2T_{\FF_e})=0$ if $\sharp (S) \ge 3e+3$.
\end{remark}

\begin{remark}\label{c2}
Take a finite subset $S\subset \PP^2$ and assume the existence of $p\in S$ such that $h^0(\Ii _{S\setminus \{p\}}(2)) =0$. By part (b) of Remark \ref{c1} we have $h^0(\Ii _S\otimes T_{\PP^2}) =h^0(\Ii _S\otimes S^2T_{\PP^2})=0$.
\end{remark}

Lemma \ref{c4} with Remarks \ref{c1} and \ref{c2} gives the following.

\begin{corollary}\label{c3}
Let $X$ be a rational surface with a relative minimal model $u: X\rightarrow Y$ and fix a finite subset $S\subset Y$ such that $u^{-1}(o)$ contains a curve for $o\in S$. Assume that 
\begin{itemize}
\item if $Y =\PP^2$, then there exists $p\in S$ such that $H^0(\Ii _{S\setminus \{p\}}(2)) =0$; 

\item if $Y =\FF_e$, then $S$ is general in $\FF_e$ and $\sharp (S)\ge \left\{
                                           \begin{array}{ll}
                                             9-3e, & \hbox{if $e\le 1$;}\\                                      
                                             3e+3, & \hbox{if $e\ge 2$.}
                                            \end{array}
                                         \right.$
\end{itemize}
Then $\Phi$ is nilpotent for any co-Higgs bundle $(\Ee, \Phi)$ of rank two on $X$.
\end{corollary}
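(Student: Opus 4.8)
The plan is to reduce Corollary~\ref{c3} to the vanishing $H^0(T_X)=H^0(S^2T_X)=0$ and then quote the first assertion of Lemma~\ref{c4}. All the substance sits in Remarks~\ref{c1} and \ref{c2}, which translate the hypotheses on $S$ into a vanishing statement on $Y$, together with the propagation mechanism of Remarks~\ref{aaa0}, \ref{aaa2} and Lemma~\ref{aaa1} (packaged as Proposition~\ref{bb1}), which pushes such a vanishing down a tower of blow-ups.

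First I would show, case by case on $Y$, that the hypotheses force $H^0(\Ii_S\otimes T_Y)=H^0(\Ii_S\otimes S^2T_Y)=0$. If $Y=\PP^2$, the assumed existence of $p\in S$ with $h^0(\Ii_{S\setminus\{p\}}(2))=0$ is exactly the input of Remark~\ref{c2}, which delivers both vanishings simultaneously. If $Y=\FF_e$, I would appeal to Remark~\ref{c1}. For $e\ge 2$, part~(c) gives $H^0(\Ii_S\otimes T_{\FF_e})=0$ and $H^0(\Ii_S\otimes S^2T_{\FF_e})=0$ once $\sharp(S)\ge 3e+3$, a bound that also clears the threshold $e+2$ needed for the first vanishing. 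For $e=0$, a general $S$ with $\sharp(S)\ge 9$ satisfies $\sharp(\pi(S)),\sharp(\eta(S))\ge 5$ and is not contained in any second quadric, so part~(a) gives $H^0(\Ii_S\otimes S^2T_{\FF_0})=0$ and hence also $H^0(\Ii_S\otimes T_{\FF_0})=0$. For $e=1$, a general $S$ with $\sharp(S)\ge 6=9-3e$ maps under the blow-down $u: \FF_1\rightarrow\PP^2$ to six general points of $\PP^2$; since six general points impose independent conditions on conics, $h^0(\PP^2,\Ii_{u(S)}(2))=0$, and part~(b) gives $H^0(\Ii_S\otimes S^2T_{\FF_1})=0$, while $\sharp(\pi(S))\ge 3$ gives $H^0(\Ii_S\otimes T_{\FF_1})=0$.

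With these vanishings established, and with $S$ contained in the image of the divisors of $X$ contracted by $u$ --- which is precisely the hypothesis that $u^{-1}(o)$ contains a curve for each $o\in S$ --- I would invoke the cohomology-vanishing part of Proposition~\ref{bb1} (equivalently, iterate Remarks~\ref{aaa0}, \ref{aaa2} and Lemma~\ref{aaa1} through the tower of blow-ups $X\rightarrow Y$) to obtain $H^0(T_X)=H^0(S^2T_X)=0$. The first assertion of Lemma~\ref{c4} then shows that $\Phi$ is nilpotent for every rank two co-Higgs bundle $(\Ee,\Phi)$ on $X$, which is the claim. One should resist upgrading this to $\Phi=0$: since $X$ is rational, $\kappa(X)=-\infty$, so the stable-case part of Lemma~\ref{c4} does not apply. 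There is no deep obstacle in any of this; the only point demanding attention is the numerical bookkeeping in the second step, in particular checking for $e\in\{0,1\}$ that ``general $S$ with $\sharp(S)\ge 9-3e$'' genuinely kills $H^0(\Ii_S\otimes S^2T_{\FF_e})$ (for $e=1$ this is the classical statement that $6$ general points of $\PP^2$ impose independent conditions on $|\Oo_{\PP^2}(2)|$).
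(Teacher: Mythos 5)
Your proposal is correct and follows exactly the route the paper intends: it compresses the argument into the single sentence ``Lemma \ref{c4} with Remarks \ref{c1} and \ref{c2} gives the following,'' i.e.\ translate the hypotheses on $S$ into $H^0(\Ii_S\otimes T_Y)=H^0(\Ii_S\otimes S^2T_Y)=0$, push this to $H^0(T_X)=H^0(S^2T_X)=0$ via Proposition \ref{bb1} (Remarks \ref{aaa0}, \ref{aaa2} and Lemma \ref{aaa1}), and conclude nilpotency from the first assertion of Lemma \ref{c4}. Your only additions are the explicit numerical checks for $e\le 1$ (e.g.\ six general points imposing independent conditions on conics), which the paper leaves implicit, and the correct caveat that the stable-case strengthening of Lemma \ref{c4} is unavailable since $\kappa(X)=-\infty$.
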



\providecommand{\bysame}{\leavevmode\hbox to3em{\hrulefill}\thinspace}
\providecommand{\MR}{\relax\ifhmode\unskip\space\fi MR }
\providecommand{\MRhref}[2]{%
  \href{http://www.ams.org/mathscinet-getitem?mr=#1}{#2}
}
\providecommand{\href}[2]{#2}

\end{document}